\theoremstyle{plain}
\newtheorem{theorem}{Theorem}
\newtheorem{prop}[theorem]{Proposition}
\newtheorem{lemma}[theorem]{Lemma}
\newtheorem{coro}[theorem]{Corollary}
\theoremstyle{definition}
\newtheorem{remark}{Remark}
\newcommand{\ts}{\hspace{0.5pt}}
\newcommand{\nts}{\hspace{-0.5pt}}
\newcommand{\CC}{\mathbb{C}\ts}
\newcommand{\RR}{\mathbb{R}\ts}
\newcommand{\QQ}{\mathbb{Q}\ts}
\newcommand{\ZZ}{{\ts \mathbb{Z}}}
\newcommand{\NN}{\mathbb{N}}
\newcommand{\XX}{\mathbb{X}}
\newcommand{\cA}{\mathcal{A}}
\newcommand{\cB}{\mathcal{B}}
\newcommand{\cI}{\mathcal{I}}
\newcommand{\cL}{\mathcal{L}}
\newcommand{\cF}{\mathcal{F}}
\newcommand{\vL}{\varLambda}
\newcommand{\one}{\mathbbm{1}}
\newcommand{\ii}{\mathrm{i}}
\newcommand{\ee}{\mathrm{e}}
\newcommand{\dd}{\, \mathrm{d}}
\newcommand{\oplam}{\mbox{\Large $\curlywedge$}}
\newcommand{\bs}{\boldsymbol}
\DeclareMathOperator{\dens}{dens}
\DeclareMathOperator{\vol}{vol}
\DeclareMathOperator{\Mat}{Mat}
\DeclareMathOperator{\card}{card}
\DeclareMathOperator{\GL}{GL}
\DeclareMathOperator{\sinc}{sinc}
\DeclareMathOperator{\supp}{supp}
\begin{document}

\title[Pair correlations via renormalisation]
{Pair correlations of aperiodic inflation rules\\[2mm]
 via renormalisation:\ Some interesting examples}

\author{Michael Baake}
% \address{Fakult\"at f\"ur Mathematik, Universit\"at Bielefeld, \\
% \hspace*{\parindent}Postfach 100131, 33501 Bielefeld, Germany}
% \email{mbaake@math.uni-bielefeld.de}

\author{Franz G\"{a}hler}
\address{Fakult\"at f\"ur Mathematik, Universit\"at Bielefeld, \newline
\hspace*{\parindent}Postfach 100131, 33501 Bielefeld, Germany}
\email{$\{$mbaake,gaehler$\}$@math.uni-bielefeld.de}

\begin{abstract}
  This article presents, in an illustrative fashion, a first step
  towards an extension of the spectral theory of constant length
  substitutions. Our starting point is the general observation that
  the symbolic picture (as defined by the substitution rule) and its
  geometric counterpart with natural prototile sizes (as defined by
  the induced inflation rule) may differ considerably. On the
  geometric side, an aperiodic inflation system possesses a set of
  exact renormalisation relations for its pair correlation
  coefficients. Here, we derive these relations for some paradigmatic
  examples and infer various spectral consequences. In particular, we
  consider the Fibonacci chain, revisit the Thue--Morse and the
  Rudin--Shapiro sytem, and finally analyse a twisted extension of the
  silver mean chain with mixed singular spectrum.
\end{abstract}

\maketitle

\section{Introduction}

The spectral theory of primitive substitution rules and the symbolic
dynamical systems defiend by them has a long history and is well
studied, compare \cite{Q} and references therein. Nevertheless, as
soon as one leaves the realm of constant length substitutions, many
open questions challenge our present level of understanding.  This is
partly due to the fact that the substitution system picture and the
inflation tiling picture, where one works with natural prototile
lengths, may differ considerably; compare \cite{CS,Robbie,TAO,BL-2}.

The majority of the dynamical systems literature on this subject deals
with the symbolic case, where work by Dekking \cite{Dekking} and
others, see \cite{NF1,NF2,FS,LMS2} and references therein, has led to
a fairly complete understanding of the constant length case, which was
recently extended in a systematic fashion in \cite{Bart}, including
higher dimensions. In all these cases, the connection between the
diffraction spectrum and the dynamical spectrum is rather well
understood. The two notions are equivalent in the pure point case
\cite{LMS,BL,BLM,LS}, and recent progress for symbolic systems also
establishes a complete equivalence via the inclusion of certain
factors \cite{BLvE}. Therefore, at least algorithmically, any constant
length substitution can be analysed completely as far as its spectrum
is concerned; see \cite{GTM,squiral,Bart} for recent developments.

The situation is less favourable outside this class. Even if one stays
within the realm of primitive inflation rules, the determination of
the dynamical spectrum is generally difficult. Here, the geometric
counterpart studied in tiling theory has certain undeniable
advantages. First, since such tilings are still of finite local
complexity, the connection between the dynamical spectrum and the
diffraction spectrum can still be used, if certain factors are
included in the discussion \cite{BLvE}. For this reason, we employ an
approach via the diffraction spectrum of the tiling spaces. Second,
the geometric setting leads to the the existence of a set of
\emph{exact} renormalisation relations for the pair correlation
coefficients of the system.  We use the term `exact' to distinguish
our approach from the widely used renormalisation schemes that are
approximate or asymptotic in nature. This approach has not attracted
much attention so far.  In fact, we are not aware of \emph{any}
reference to this approach beyond the constant length case, though the
principal idea has certainly been around for a while, and has been
used, often in an approximate way, for several physical quantities
such as electronic or transport properties; see \cite{TS} for an
example and further references.

In this paper, we we want to show the power of exact renormalisation
relations for spectral properties, in the form of a first step via
some illustrative examples. A more general approach will be presented
in a forthcoming publication.  It will be instrumental for our
analysis that we formulate various aspects on the symbolic level,
while the core of our analysis rests on the natural geometric
realisation. To make the distinction as transparent as possible, we
will speak of \emph{substitution rules} on the symbolic side, but of
\emph{inflation rule} on its geometric counterpart, thus following the
notation and terminology of the recent monograph \cite{TAO}. Also,
various results are briefly recalled from there. Rather than repeating
the proofs, we provide precise references instead.

Below, we work along a number of examples, all of them with
Pisot--Vijayaraghavan (PV) numbers as inflation multipliers. Since we
do \emph{not} demand the characteristic polynomial to be irreducible
over $\QQ$, there is still enough freedom to encounter systems with
mixed spectrum. In fact, all point sets that we encounter along the
way will be linearly repetitive Meyer sets. An interesting linearly
repetitive inflation point set with non-PV multiplier (hence not a
Meyer set) will be discussed in detail in \cite{BFGR}.

The paper is structured as follows. After recalling some facts about
translation bounded measures on $\RR$ and their Fourier transforms in
Section~\ref{sec:prelim}, we begin with a detailed analysis of the
Fibonacci substitution and inflation in Section~\ref{sec:Fibo}.  This
is both a paradigm of the theory and an instructive example along
which we can develop our ideas as well as further notions, wherefore
this is also the longest section. Here, we introduce a set of exact
renormalisation equations for the general pair correlation
coefficients and compare the findings with what is known from the
model set description. Then, we use the new approach to derive an
alternative proof of the pure point nature of the diffraction
spectrum, and hence also of the dynamical spectrum via the known
equivalence result for this case \cite{LMS,BL,LS}.

After that, in Section~\ref{sec:TM-RS}, we briefly revisit the classic
Thue--Morse and Rudin--Shapiro sequences from the renormalisation
point of view. The point of this exercise is to show that our
approach, in the constant length setting where the symbolic and the
geometric pictures coincide, is essentially equivalent to the
traditional approach as described in \cite{Q} and recently extended in
\cite{Bart}. Still, there are several aspects of a more algebraic
nature that seem to deserve further attention.

Finally, by imposing an involutory \emph{bar swap} symmetry, we
construct an extension of the silver mean chain with mixed spectrum in
Section~\ref{sec:sm-mixed}. The main point here is that such
extensions are not restricted to the constant length case (where they
are known from examples such as those of the previous section or many
others as in \cite{GTM,BGG,squiral}).  In fact, as our example
indicates, there is an abundance of interesting and completely natural
primitive inflation rules that produce repetitive Meyer sets with
mixed spectrum. In our opinion, this has hitherto been more or less
neglected. For the explicit analysis, the exact renormalisation scheme
is used to determine the spectral type by a somewhat subtle
application of the Riemann--Lebesgue lemma, which leads to a singular
spectrum of mixed type.

Overall, we believe that the geometric picture with its exact
renormalisation relations for the pair correlation coefficients
provides a powerful tool also for the general case of primitive
inflation rules. This will be further developed in a forthcoming
publication.

\section{Preliminaries}\label{sec:prelim}

A \emph{measure} on $\RR$ is a continuous linear functional on the
space $C_{\mathsf{c}} (\RR)$ of continous functions with compact
support. By the Riesz--Markov theorem, this specifies precisely the
class of regular Borel measures on $\RR$. Note that these need not be
finite measures, though all examples below will be \emph{translation
  bounded}, meaning measures $\mu$ such that the total variation
measure $\lvert \mu \rvert$ satisfies $\sup_{t\in\RR} \lvert \mu
\rvert (t+K) < \infty$, for any compact $K\subset \RR$; see \cite{Hof}
and \cite[Sec.~8.5]{TAO} for more. If $\mu$ is a measure, its twisted
counterpart $\widetilde{\mu}$ is defined via $\widetilde{\mu} (g) =
\overline{\mu (\widetilde{g}\ts )}$ for $g\in C_{\mathsf{c}} (\RR)$,
where $\widetilde{g} (x) := \overline{g (-x)}$.

The convolution of two finite measures is denoted as $\mu * \nu$, and
the volume-averaged (or Eberlein) convolution of two translation
bounded measures by
\[
    \mu \circledast \nu \, := \, \lim_{R\to\infty}
    \frac{\mu |^{}_{R} * \nu |^{}_{R} }{2 R} \ts ,
\]
provided the limit exists (we shall not consider any other case
below). Here, $\mu |^{}_{R}$ denotes the restriction of $\mu$ to the
interval $(-R,R)$.

A measure $\mu$ is called \emph{positive definite}, if $\mu (g *
\widetilde{g}\ts ) \ge 0 $ holds for all $g\in C_{\mathsf{c}} (\RR)$.
There are several possibilities to define the Fourier transform of a
measure, provided it exists. This is a non-trivial issue, and part of
our later analysis will rely on the existence. Here, we use a version
of the Fourier transform that, for integrable functions on $\RR$,
reduces to
\[
    \widehat{f} (k) \, = \int_{\RR} \ee^{-2 \pi \ii kx} 
     f(x) \dd x  \ts .
\]
Any positive definite measure is Fourier transformable. The Fourier
transform of a positive definite measure is a positive measure; see
\cite[Ch.~I.4]{BF} or \cite[Sec.~8.6]{TAO} for details and
\cite{Rudin, BF} for general background.

\begin{lemma}\label{lem:transformable}
  Let\/ $\mu$, $\nu$ be translation bounded measures such that\/
  $\mu\circledast \widetilde{\nu}$ as well as\/
  $\mu\circledast\widetilde{\mu}$ and\/
  $\nu\circledast\widetilde{\nu}$ exist. Then, $\mu\circledast
  \widetilde{\nu}$ is a translation bounded and transformable measure,
  as is\/ $\widetilde{\mu}\circledast\nu$.
\end{lemma}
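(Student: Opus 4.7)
The plan is to handle the two assertions for $\mu\circledast\widetilde{\nu}$ separately---translation boundedness by a direct volume estimate, and Fourier transformability by a polarisation argument into autocorrelations---and then to deduce the corresponding properties of $\widetilde{\mu}\circledast\nu$ from the identity $\widetilde{\mu\circledast\widetilde{\nu}} = \widetilde{\mu}\circledast\nu$. This identity follows from conjugate linearity of $\widetilde{\cdot}$ together with $\widetilde{\mu * \nu} = \widetilde{\mu} * \widetilde{\nu}$ and $\widetilde{\widetilde{\nu}} = \nu$, and since the twisting operation preserves both translation boundedness and Fourier transformability (on the transform side it is just complex conjugation), the reduction is clean.

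For translation boundedness of $\mu\circledast\widetilde{\nu}$, I would bound the pre-limits $\lvert \mu |^{}_{R} * \widetilde{\nu} |^{}_{R} \rvert (t+K)$ uniformly in $R>0$ and $t\in\RR$, for any fixed compact $K$. Since $\widetilde{\nu}$ inherits the translation bounds of $\nu$, the general inequality $\lvert \mu * \widetilde{\nu}\rvert \le \lvert \mu\rvert * \lvert\widetilde{\nu}\rvert$ combined with Fubini gives
\[
 \lvert \mu|^{}_{R} * \widetilde{\nu}|^{}_{R}\rvert (t+K) \, \le \,
 M^{}_{\mu,K}\ts\lvert\nu\rvert(-R,R) \, ,
\]
with $M^{}_{\mu,K} := \sup_{s\in\RR}\lvert\mu\rvert(s+K) < \infty$ by translation boundedness of $\mu$. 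Translation boundedness of $\nu$ also gives $\lvert\nu\rvert(-R,R) = O(R)$, so dividing by $2R$ produces a uniform bound, and lower semicontinuity of total variation under vague convergence passes this on to the limit $\mu\circledast\widetilde{\nu}$.

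For Fourier transformability, I would use polarisation through the translation bounded complex measures $\eta^{}_{c} := \mu + c\ts\nu$ with $c\in\{1,\ii\}$. Bilinear expansion yields
\[
 \eta^{}_{c}\circledast\widetilde{\eta^{}_{c}} \, = \,
 \mu\circledast\widetilde{\mu} + \bar{c}\ts(\mu\circledast\widetilde{\nu})
 + c\ts(\nu\circledast\widetilde{\mu}) + \lvert c\rvert^{2}\ts
 (\nu\circledast\widetilde{\nu}) \, ,
\]
and each of the four summands on the right exists by hypothesis, the third via $\nu\circledast\widetilde{\mu} = \widetilde{\mu\circledast\widetilde{\nu}}$. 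Hence the left-hand side exists and, as the autocorrelation of a translation bounded measure, is positive definite and therefore Fourier transformable by the result cited from \cite[Ch.~I.4]{BF}. Since $\mu\circledast\widetilde{\mu}$ and $\nu\circledast\widetilde{\nu}$ are themselves autocorrelations and thus transformable, inserting $c=1$ and $c=\ii$ in turn shows that both $\mu\circledast\widetilde{\nu} + \widetilde{\mu\circledast\widetilde{\nu}}$ and $\ii\ts(\widetilde{\mu\circledast\widetilde{\nu}} - \mu\circledast\widetilde{\nu})$ are transformable, and a $2\times 2$ linear solve then recovers $\mu\circledast\widetilde{\nu}$ as a complex combination of transformable measures. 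The delicate point I would check most carefully is the bilinear expansion of $\eta^{}_{c}\circledast\widetilde{\eta^{}_{c}}$, for which term-by-term vague convergence of the four contributions to $(\mu|^{}_{R} + c\ts\nu|^{}_{R})*(\widetilde{\mu}|^{}_{R} + \bar{c}\ts\widetilde{\nu}|^{}_{R})/(2R)$ is required; this is precisely what the three existence hypotheses of the lemma guarantee, so nothing further is needed.
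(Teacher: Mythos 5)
Your proof is correct and follows essentially the same route as the paper's: both reduce $\widetilde{\mu}\circledast\nu$ to $\mu\circledast\widetilde{\nu}$ via the twist identity, and both obtain transformability by polarising $\mu\circledast\widetilde{\nu}$ into positive definite autocorrelations of measures $\mu + c\ts\nu$, whose existence is justified term by term from the three hypotheses (with $\nu\circledast\widetilde{\mu}=\widetilde{\mu\circledast\widetilde{\nu}}$ supplying the fourth term). The only cosmetic differences are that the paper uses the full four-term polarisation identity $\mu\circledast\widetilde{\nu}=\frac{1}{4}\sum_{\ell}\ii^{\ell}(\mu+\ii^{\ell}\nu)\circledast\widetilde{(\mu+\ii^{\ell}\nu)}$ rather than your two-value-plus-linear-solve variant, and obtains translation boundedness from positive definiteness rather than from your explicit volume estimate.
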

\begin{proof}
  Observe first that $\widetilde{\mu}\circledast\nu = \widetilde{\mu
    \circledast\widetilde{\nu}}$, wherefore it suffices to prove the
  claim for $\mu \circledast\widetilde{\nu}$. Now, as a variant of the
  (complex) polarisation identity, one verifies that
\begin{equation}\label{eq:polar}
   \mu\circledast \widetilde{\nu} \, = \, \frac{1}{4}
   \sum_{\ell=1}^{4} \ii^{\ell} (\mu + \ii^{\ell} \nu) \circledast
   (\mu + \ii^{\ell} \nu)\!\widetilde{\phantom{T}} ,
\end{equation}
where all measures on the right hand side exist due to our
assumptions.  Consequently, $\mu\circledast \widetilde{\nu}$ is a
complex linear combination of four positive definite measures, each of
which is translation bounded and transformable, hence $\mu\circledast
\widetilde{\nu}$ is translation bounded and transformable as well.
\end{proof}

Let us now briefly recall an important result on measures with Meyer
set support, which is due to Strungaru \cite{Nicu}.  Let $\vL$ be a
repetitive Meyer set, and assume that the hull of $\vL$ is uniquely
ergodic under the translation action of $\RR$.  Then, the
autocorrelation measure $\gamma$ of $\vL$ is well-defined and has a
unique Eberlein decomposition as
\[
    \gamma \, = \, \gamma^{}_{\mathsf{s}} + \gamma^{}_{0}
\]
into a strongly almost periodic measure, whose Fourier transform
exists and is a pure point measure, and a null-weakly almost periodic
measure, whose Fourier transform is a continuous measure; see
\cite{GLA} for the underlying notions and results.

Moreover, both parts, $\gamma^{}_{\mathsf{s}}$ and $\gamma^{}_{0}$,
are supported in a model set with a closed, compact window.  Given a
cut and project scheme for $\vL$ according to \cite{BM}, one can use
the smallest such window that defines a model set cover of $\vL -
\vL$.  The same conclusion holds for the auto\-correlation of a weighted
extension.  For our slightly more general situation, where we do not
only consider measures of the form $\mu \circledast \widetilde{\mu}$
but also products of the form $\mu \circledast \widetilde{\nu}$ as in
Lemma~\ref{lem:transformable}, one employs once more the complex
polarisation identity~\eqref{eq:polar} to obtain the corresponding
decomposition result for the more general correlation measures that we
need.

\section{Example 1: The Fibonacci chain}\label{sec:Fibo}

One version of the ubiquitous Fibonacci substitution is given
by the rule $\varrho \! : \, a \mapsto ab \ts , \, b \mapsto a$.
It is primitive, with substitution matrix 
\[
      M \, = \, M^{}_{\varrho} = \begin{pmatrix}
      1 & 1 \\ 1 & 0 \end{pmatrix}
\]
and characteristic polynomial $x^2 - x - 1$. This polynomial is
irreducible over $\QQ$, with roots $\tau$ and $\tau ' = 1 - \tau$,
where $\tau = \frac{1}{2} (1 + \sqrt{5}\,)$ is the golden ratio;
see \cite{TAO} for details and background.

For a geometric realisation, one reinterprets the letters as marked
intervals in $\RR$, with a point at their left endpoints. Choosing
length $\tau$ for $a$ and length $1$ for $b$, which emerges from
the left Perron--Frobenius (PF) eigenvector of $M$, compare
\cite[Ch.~4]{TAO} for background, one obtains the (geometric) Fibonacci
inflation rule. Here, in one iteration step, each interval is
streched by a factor of $\tau$ and then dissected into intervals
of the original size according to the symbolic rule, which is
possible due to the eigenvector condition. By abuse of notation,
we also call this inflation rule $\varrho$.

A bi-infinite fixed point is obtained via iterated inflation starting
from a legal seed, $a|a$ say, where $|$ marks the reference point that
is placed at $0$ in the geometric realisation; see \cite[Ch.~4]{TAO}
for the basic definitions. More precisely, the
corresponding iteration sequence converges (in the local topology)
towards a $2$-cycle under the inflation. Each member of that cycle is
then a \emph{fixed point} under $\varrho^2$. It is a tiling of $\RR$
with two prototiles, each of which carries a point at its left
endpoint.  Consequently, one may equally well view it as a Delone set
in $\RR$ that carries a colouring according to the types, $a$ and
$b$. We shall use both pictures in parallel. Let us note in passing
that the two fixed points of $\varrho^{2}$ are \emph{proximal} in a
strong sense. Indeed, they completely agree except for the first two
positions on the left of the marker, where they alternate between $ab$
and $ba$ under the action of $\varrho$.

For concreteness, let us assume that we select the fixed point of
$\varrho^{2}$ with seed $a|a$. Then, for the (coloured) Delone set
$\vL$ of its marker points, we obtain the relation
\[
    \vL \, = \, \vL_{a} \ts \dot{\cup} \ts \vL_{b} \ts ,
\]
where each of the three sets is a regular model set (or
cut and project set) for the cut and project scheme (CPS), see
\cite[Sec.~7.2]{TAO},
\begin{equation}\label{eq:candp}
\renewcommand{\arraystretch}{1.2}\begin{array}{r@{}ccccc@{}l}
   & \RR & \xleftarrow{\,\;\;\pi\;\;\,} & \RR \times \RR & 
        \xrightarrow{\;\pi^{}_{\mathrm{int}\;}} & \RR & \\
  \raisebox{1pt}{\text{\footnotesize dense}\,} \hspace*{-1ex}
   & \cup & & \cup & & \cup & \hspace*{-1ex} 
   \raisebox{1pt}{\,\text{\footnotesize dense}} \\
   & \ZZ[\tau] & \xleftarrow{\; 1-1 \;} & \cL & 
        \xrightarrow{\; 1-1 \;} &\ZZ[\tau] & \\
   & \| & & & & \| & \\
   & L & \multicolumn{3}{c}{\xrightarrow{\qquad\qquad\;\,\star
       \,\;\qquad\qquad}} 
       &  {L_{}}^{\star\nts}  & \\
\end{array}\renewcommand{\arraystretch}{1}
\end{equation}
with $\ZZ[\tau]$ being the ring of integers in the real quadratic
field $\QQ(\sqrt{5}\,)$, while
\[
   \cL \, = \, \bigl\{ (x,x^{\star}) \mid x \in
       \ZZ[\tau] \bigr\} \subset \RR^{2}
\]
is the lattice that emerges from the ring $\ZZ[\tau]$ via its Minkowski
embedding; see \cite[Fig.~3.3]{TAO}. Here, ${}^{\star}$ denotes
algebraic conjugation in the quadratic field $\QQ(\sqrt{5}\,)$, as
defined by $\sqrt{5} \mapsto - \sqrt{5}$. This is the the $\star$-map
of the CPS of Eq.~\eqref{eq:candp}, with $\tau^{\star} =
\frac{-1}{\tau} = 1-\tau$. Let us stress that the particular choice of
the fixed point has no influence on the CPS --- other choices would
give the same setting; see \cite{BM} for a general result in this
direction. Our choice of fixed point, however, does result in a
specific window for the model set description of $\vL$.

A \emph{model set} in the CPS of Eq.~\eqref{eq:candp} is a subset
of \emph{direct} space $\RR$ of the form
\[
    \oplam (W) \, := \, \{ x \in \ZZ[\tau] \mid x^{\star} \in W \} \ts ,
\]
where the window $W \subset \RR $ is assumed to be a relatively
compact set in \emph{internal} space with non-empty interior. A model
set is called \emph{regular} when $\partial W$ has zero measure in
internal space.  In particular, one finds \cite[Ex.~7.3]{TAO} for our
selected fixed point with seed $a|a$ that
\[
   \vL_{a} \, = \, \oplam \bigl((\tau-2,\tau-1]\ts \bigr) , \quad
   \vL_{b} \, = \, \oplam \bigl((-1,\tau-2]\ts \bigr) 
   \quad \text{and} \quad
   \vL \, = \, \vL_{a} \cup \vL_{b} \, = \,
    \oplam \bigl((-1,\tau-1]\ts \bigr).
\]
The other fixed point of $\varrho^{2}$, with seed $b|a$, can be
described similarly, with the windows now containing the left endpoint,
but not the right one. Model sets are Meyer sets, which means that $\vL$
is relatively dense while $\vL - \vL$ is uniformly discrete; see
\cite[Prop.~7.5]{TAO}.

The natural \emph{autocorrelation} $\gamma^{}_{\vL}$ of the model set
$\vL$ is a translation bounded pure point measure of the form
$\gamma^{}_{\vL} = \sum_{z\in\vL - \vL} \eta (z) \ts \delta_{z}$ with
\[
    \eta(z) \, := \, \lim_{R\to\infty} 
    \frac{\card \bigl( \vL^{}_{R} \cap (z+\vL^{}_{R}) 
    \bigr)}{2R} \ts ,
\]
where $\vL^{}_{R} := \vL\cap (-R,R)$.  The limit exists for all
$z\in\RR$, but is non-zero only for $z\in\vL -\vL$, the latter being a
Delone set in this case (and, in fact, also a model set). Note
that $\eta(-z) = \eta(z)$ holds for all $z\in\RR$.  For $z\in \QQ
(\sqrt{5}\,)$, one obtains from \cite[Prop.~9.8]{TAO} the explicit
formula
\begin{equation}\label{eq:eta-int}
   \eta (z) \, = \, \dens (\vL) \,
   \frac{\vol \bigl(W \cap (z^{\star} + W) \bigr)}{\vol (W)}
   \, = \, \frac{1}{\sqrt{5}}  \int_{\RR} 1^{}_{W} (y) \ts
   1^{}_{z^{\star}+W} (y) \dd y \ts ,
\end{equation}
with $W=(-1,\tau-1]$ as above and $\dens (\vL) = \tau/\sqrt{5}$.
Similar formulas emerge for the subsets $\vL_{a}$ and $\vL_{b}$.
Since $\vL - \vL \subset \ZZ[\tau]$, all non-zero values of $\eta (z)$
are covered by formula~\eqref{eq:eta-int}.

The measure $\gamma^{}_{\vL}$ is positive definite, and hence Fourier
transformable. By the Bochner--Schwartz theorem,
$\widehat{\gamma^{}_{\vL}}$ is then a positive measure, called the
\emph{diffraction measure} \cite{Hof,TAO} of the set $\vL$. Here, it
is a pure point measure of the form
\[
    \widehat{\gamma^{}_{\vL}} \, = \, \sum_{k\in \cF}
    I(k) \ts \delta^{}_{k} \ts , \quad \text{with} \quad
    I(k) \, = \, \left( \frac{\tau}{\sqrt{5}} \, 
    \sinc (\pi \tau k^{\star}) \right)^{2} ,
\]
where $\cF = \ZZ[\tau]/\sqrt{5} \subset \QQ (\sqrt{5}\,)$ and $\sinc
(x) := \frac{\sin(x)}{x}$; see \cite[Sec.~9.4.1]{TAO} for
details. Note that $I(k) =0$ for $k= m\tau$ with $m\in\ZZ\setminus \{
0 \}$, which are all points of $\cF$ for which the intensity
vanishes. Such points are known as the \emph{extinction points} of the
diffraction measure (or of the Fourier--Bohr spectrum); compare
\cite[Rem.~9.10 and Sec.~9.4.1]{TAO}.

The diffraction measure was calculated for the specific set $\vL$,
but it is the same for all other members of the hull of $\vL$,
\[
    \XX (\vL) \, := \, \overline{ \{ t + \vL \mid
       t \in \RR \} } \ts ,
\]
where the closure is taken in the local topology as usual \cite{TAO}.
We thus know that the diffraction measure $\widehat{\gamma^{}_{\vL}}$
is that of the hull as well, and hence that of the dynamical system
$\bigl(\XX (\vL), \RR\bigr)$ with the canonical translation action of
$\RR$.  This system is strictly ergodic \cite{Q,Sol,MR}, so it is
minimal and possesses a unique invariant probability measure,
$\mu^{}_{\mathrm{F}}$ say.  By the general eqivalence theorem between
pure point diffraction and dynamical spectra, see
\cite{Martin,LMS,BL,LS} and references therein, we then also know that
the dynamical system $\bigl(\XX (\vL), \RR, \mu^{}_{\mathrm{F}}\bigr)$
has pure point spectrum. Here, the \emph{dynamical spectrum}
(in additive formulation) is given by
\begin{equation}\label{eq:Fib-spec}
   \cF \, = \, \ZZ[\tau]/\sqrt{5} \ts ,
\end{equation}   
which is the smallest additive subgroup of $\RR$ that contains all
points (wave numbers) $k$ with $I(k) > 0$; see \cite{BL,Lenz,BLvE} and
references therein for further background.  \medskip

Let us now refine the analysis a little, in that we not only look at
the autocorrelation of $\vL$ as a point set, but as a \emph{coloured}
point set. To do so, let us define the pair correlation functions (or
coefficients) $\nu^{}_{\alpha \beta} (z)$ with $\alpha,\beta \in \{
a,b\}$ as the \emph{relative} frequency of two points in $\vL$ at
distance $z$ subject to the condition that the left point is of type
$\alpha$ and the right one of type $\beta$. The term `relative' means
that the frequency be determined per point of $\vL$ rather than per
unit length. Clearly, $\nu^{}_{\alpha \beta} (z)=0$ for any $z\not\in
\vL-\vL$, and $\nu^{}_{\alpha \beta} (0)=0$ for $\alpha\neq\beta$ (as
the type of a point is unique). In fact, the right PF eigenvector of
the substitution matrix $M$ gives us the values $\nu^{}_{aa} (0) =
1/\tau$ and $\nu^{}_{bb} = 1/\tau^2$, which coincide with the relative
frequencies of the letters $a$ and $b$ in the (symbolic) fixed point.

As follows once again from the model set description of $\vL$ (or
alternatively from the unique ergodicity of the hull under the
translation action of $\RR$ via the ergodic theorem), the refined pair
correlation coefficients exist. One has the general symmetry relation
\begin{equation}\label{eq:symm}
   \nu^{}_{\alpha \beta} (-z) \, = \, \nu^{}_{\beta \alpha} (z) \ts ,
\end{equation}
which can be verified directly from the definition of the coefficients
as a mean by a simple calculation. Employing the above model set
description, the coefficients are given by
\begin{equation}\label{eq:nu-sum}
    \nu^{}_{\alpha \beta} (z) \, = \,
    \frac{\vol \bigl( W_{\alpha} \cap 
    (W_{\beta} - z^{\star})\bigr)}{\vol (W)}
\end{equation}
with $W_{a}=(\tau-2,\tau-1]$, $W_{b}=(-1,\tau-2]$ and $W=W_{a}\cup
W_{b}= (-1,\tau-1]$.  One also has the summatory relationship
\begin{equation}\label{eq:rel-nu}
   \frac{ \eta (z)}{\dens (\vL)} \, = \, 
    \nu^{}_{aa} (z) +  \nu^{}_{ab} (z) +  
    \nu^{}_{ba} (z) +  \nu^{}_{bb} (z) \ts ,
\end{equation}
which is illustrated in Figure~\ref{fig:eta-sum} as a relation in
internal space, hence as a function of the variable $z^{\star}$. More
precisely, looking at the right-hand side of Eq.~\eqref{eq:nu-sum} as
a function of $z^{\star}$ for each of the choices $\alpha, \beta \in
\{ a,b \}$ leads to four functions with a continuous extension. Two of
them are the tent-shaped covariograms of $W_{a}$ and $W_{b}$; compare
\cite[Rem.~9.8]{TAO}. They are shaded in Figure~\ref{fig:eta-sum}, one
being shifted in vertical direction. The other two are trapezoidal
functions. When adding them up as shown in the figure, one obtains the
covariogram of the total window $W$.

\begin{figure}[t]
\begin{center}
 \includegraphics[width=0.7\textwidth]{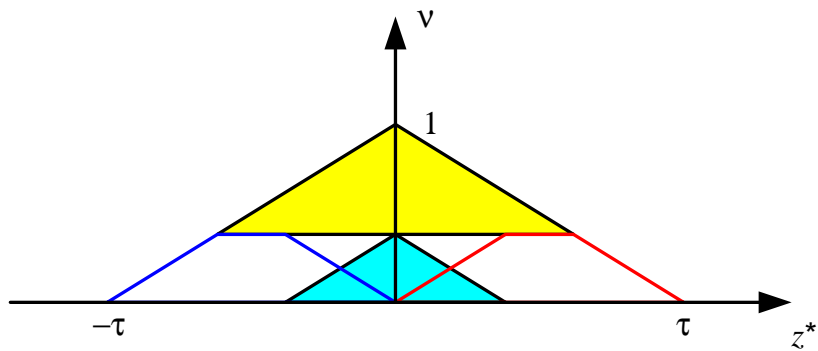}
 \end{center}
 \caption{\label{fig:eta-sum} Representation of the relative
   autocorrelation $\nu$ for the Fibonacci system as a sum of four
   terms in the embedding formalism (internal space) of the
   corresponding CPS; see text around Eq.~\eqref{eq:rel-nu} for
   details.}
\end{figure}

In view of the symmetry relation \eqref{eq:symm}, it suffices to
know the functions $\nu^{}_{\alpha \beta} (z)$ for non-negative
$z\in\vL-\vL$. Note that $\vL - \vL = \vL' - \vL'$ holds for any
$\vL' \in \XX (\vL)$, so that this Minkowski difference is constant
on the hull. Now, the inflation structure tells us how 
the values of $\nu^{}_{\alpha \beta} (z)$ relate to the values
on a scale that is reduced by a factor of $\tau$. 

\begin{prop}\label{prop:Fibo-relations}
  Consider the Fibonacci chain in the geometric setting with interval
  prototiles of lenghts\/ $\tau$ and\/ $1$ as described above. Then,
  the pair correlation coefficients\/ $\nu^{}_{\!\alpha\beta} (z)$,
  with\/ $\alpha, \beta \in \{ a,b\}$, satisfy the exact
  renormalisation relations
\begin{equation}\label{eq:Fibo-rec}
\begin{split}
   \nu^{}_{aa} (z) \, & = \, \tfrac{1}{\tau} \bigl(
     \nu^{}_{aa} (\tfrac{z}{\tau}) + \nu^{}_{ab} (\tfrac{z}{\tau})
    +\nu^{}_{ba} (\tfrac{z}{\tau}) + \nu^{}_{bb} (\tfrac{z}{\tau}) 
       \bigr) , \\
   \nu^{}_{ab} (z) \, & = \, \tfrac{1}{\tau} \bigl(
      \nu^{}_{aa} (\tfrac{z}{\tau} - 1) + 
      \nu^{}_{ba} (\tfrac{z}{\tau} - 1)  \bigr), \\
   \nu^{}_{ba} (z) \, & = \, \tfrac{1}{\tau} \bigl(
      \nu^{}_{aa} (\tfrac{z}{\tau} + 1) + 
      \nu^{}_{ab} (\tfrac{z}{\tau} + 1) \bigr), \\
   \nu^{}_{bb} (z) \, & = \, \tfrac{1}{\tau} \bigl( 
      \nu^{}_{aa} (\tfrac{z}{\tau}) \bigr),
\end{split}
\end{equation}
with\/ $z\in \ZZ[\tau]$ and\/ $\nu^{}_{\!\alpha\beta} (z) = 0$
whenever\/ $z\not\in \varLambda_{\beta} - \varLambda_{\alpha}$.
\end{prop}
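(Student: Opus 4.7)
The plan is to exploit the geometric inflation structure directly. Reading off the rule $\varrho\colon a \mapsto ab$, $b \mapsto a$ with prototile lengths $\tau$ and $1$: an $a$-tile at $[z, z+\tau)$ is replaced, after one inflation step, by an $a$-tile at $[\tau z, \tau z + \tau)$ and a $b$-tile at $[\tau z + \tau, \tau z + \tau + 1)$, while a $b$-tile at $[z, z+1)$ becomes a single $a$-tile at $[\tau z, \tau z + \tau)$. At the level of coloured point sets, this yields the exact inflation identities
\begin{equation*}
   \vL_{a} \, = \, \tau \vL_{a} \, \dot{\cup} \, \tau \vL_{b}
   \quad \text{and} \quad
   \vL_{b} \, = \, \tau \vL_{a} + \tau \ts ,
\end{equation*}
with the mild abuse of notation that treats $\vL$ and $\varrho(\vL)$ as interchangeable, justified by their having identical pair correlation functions in the uniquely ergodic hull $\XX(\vL)$.

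With these decompositions in hand, I would compute each pair correlation coefficient by passing to the deflation preimage. For $\nu^{}_{aa}(z)$, every pair $(x, x+z)$ with both points in $\vL_{a}$ is of the form $(\tau u, \tau v)$ with $u, v \in \vL$ of arbitrary colour and $v - u = z/\tau$; splitting on the colours of $u$ and $v$ yields the four-term right-hand side of the first relation. For $\nu^{}_{ab}(z)$, one writes $x = \tau u$ with $u \in \vL$ and $x + z = \tau v + \tau$ with $v \in \vL_{a}$, forcing $v - u = z/\tau - 1$, and the second relation follows by splitting on the colour of $u$. The third and fourth relations arise by the same bookkeeping, with the shifts $\pm 1$ originating from the translate $+\tau$ in the description of $\vL_{b}$, and the single term in the $bb$-case reflecting that both $x$ and $x + z$ must lie in $\tau \vL_{a} + \tau$.

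To turn pair counts into relative frequencies, I would divide by $\card\bigl(\vL \cap (-R, R)\bigr)$ and let $R \to \infty$. The preimage variable $u = x/\tau$ lies in $(-R/\tau, R/\tau)$ up to a boundary slab of $O(1)$ points, and the density identity
\begin{equation*}
   \frac{\card\bigl(\vL \cap (-R/\tau, R/\tau)\bigr)}{\card\bigl(\vL \cap (-R, R)\bigr)}
   \, \longrightarrow \, \frac{1}{\tau} \ts ,
\end{equation*}
which is a consequence of unique ergodicity (equivalently, of $\dens(\vL) = \tau/\sqrt{5}\,$), produces the common prefactor $1/\tau$ in all four relations. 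The vanishing clause $\nu^{}_{\alpha\beta}(z) = 0$ for $z \notin \vL_{\beta} - \vL_{\alpha}$ is immediate from the definition and is automatically compatible with the recursion, since any right-hand term whose argument lies outside the relevant Minkowski difference already vanishes. The argument is essentially combinatorial; the only technical point is the uniform boundary control, which is harmless because boundary overcounts are $O(1)$ while $\card\bigl(\vL \cap (-R, R)\bigr)$ grows linearly in $R$. The main obstacle is thus not analytic but the careful four-case bookkeeping.
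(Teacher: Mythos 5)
Your proof is correct and its combinatorial core coincides with the paper's: both arguments relate each tile to its level-$1$ supertile, track the offsets ($0$ for the $a$-child of an $a$-supertile, $\tau$ for its $b$-child, $0$ for the child of a $b$-supertile) to produce the shifts $\pm 1$ in the rescaled argument, and obtain the prefactor $1/\tau$ from the density ratio of supertiles to tiles. The only difference is in how the tile--supertile correspondence is justified: the paper invokes local recognisability, so that the unique supertile decomposition is available for \emph{every} element of the hull, whereas you work with the explicit forward inflation identities $\vL_a = \tau\vL_a \ts\dot\cup\ts \tau\vL_b$ and $\vL_b = \tau\vL_a + \tau$ for the chosen fixed point and then use unique ergodicity (constancy of the correlations on $\XX(\vL)$) to identify the correlations of $\varrho(\vL)$ with those of $\vL$. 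Both justifications are legitimate; yours is slightly more elementary since it avoids the recognisability theorem, at the cost of being tied to the fixed point rather than to arbitrary hull elements, which is harmless here because the coefficients are constants of the uniquely ergodic hull.
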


\begin{proof}
  Since our inflation rule is aperiodic, we have local recognisability
  \cite{Q}. This means that, in any element of the hull, each tile
  lies inside a unique level-$1$ supertile that is identified by a
  local rule.  Concretely, each patch of type $ab$ constitutes a
  supertile of type $a$, while each tile of type $a$ that is followed
  by another $a$ (to the right) stands for a supertile of type
  $b$. From now on, we simply say supertile, as no level higher than
  $1$ will occur in this proof.

  The distance $z$ of the left endpoints of two tiles is now given
  by the distance $z'$ of the left endpoints of the two supertiles 
  containing them, plus a correction coming from the offsets of the 
  tiles within their supertiles. We may have $z=z'$ (when both tiles 
  are of the same type), but also $z=z'+\tau$ (when the left tile is
  of type $a$ and the right one of type $b$), or $z=z'-\tau$ (for 
  the opposite order); see Figure~\ref{fig:recog} for an illustration.

  In what follows, we first assume $z>0$, and discuss the remaining
  cases afterwards.  Due to the inflation structure, it is clear that
  the frequency of two supertiles of type $\alpha$ (left) and $\beta$
  (right) at distance $z'$, determined relative to the tiles of the
  \emph{original} size, is given by $\frac{1}{\lambda}\ts
  \nu_{\alpha\beta}\bigl( \frac{z'}{\lambda}\bigr)$. This follows from
  the simple observation that, for any $\vL'\in\XX(\vL)$, the point
  set of the left endpoints of the supertiles is a set of the form
  $\tau \vL''$ for some $\vL''\in\XX(\vL)$.

  Collecting all supertile distance frequencies that contribute to a
  given coefficient $\nu^{}_{\!\alpha\beta} (z)$ then results in the
  claimed identities. For instance, $\nu^{}_{bb} (z)$ equals the
  frequency of two type-$a$ supertiles at the same distance, counted
  relative to the original tile sizes, hence giving the last identity
  of \eqref{eq:Fibo-rec}, while $\nu^{}_{\nts aa} (z)$ has
  contributions from all pairs of supertiles at distance $z$.

  The claim on the supports is obvious, and one can check that the set
  of equations properly extends to $z<0$ via $\nu^{}_{\!\alpha\beta}
  (-z) = \nu^{}_{\nts\beta\alpha} (z)$. Next, one can verify explicitly
  that the identities are also satisfied for $z=0$, where they boil
  down to Eq.~\eqref{eq:reno-zero} below, because $ \nu^{}_{\nts ab}
  (0) = \nu^{}_{ba} (0) = 0$ as well as $\nu^{}_{\nts aa} (\pm 1) =
  \nu^{}_{ba} (-1) = \nu^{}_{\nts ab} (1) = 0$ due to the geometry of
  the tiles.  In particular, $ \nu^{}_{\nts aa} (0)$ and $ \nu^{}_{bb}
  (0)$ are the relative prototile frequencies, in agreement with our
  setting.
\end{proof}

\begin{figure}[t]
 \begin{center}
 \includegraphics[width=0.5\textwidth]{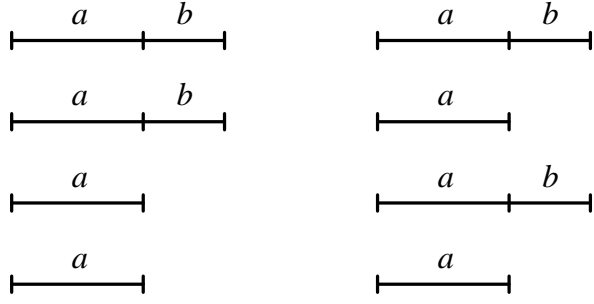}
 \end{center}
 \caption{\label{fig:recog} Pair structure for the derivation of
   Eq.~\eqref{eq:Fibo-rec} via local recognisability. Each of the two
   tiles at distance $z$ is to be located in its unique cover by a
   level one supertile (left versus right column).}
\end{figure}

{}From the structure of the arguments, and recalling that $\tau^{2} =
\tau + 1$, it is immediately clear that all coefficients with
arguments $z$ that satisfy $|z| > \tau^{2}$ are completely determined
in a recursive manner from those with $|z| \le \tau^{2}$, together
with $\nu^{}_{\alpha\beta} (z) = 0$ for $z\notin \vL - \vL$. This is
the \emph{recursive} part of the renormalisation relations.  Likewise,
the relations for all arguments with $|z|\le \tau^{2}$ are closed, and
provide finitely many linear equations to determine the
$\nu^{}_{\alpha\beta} (z) = 0$ for arguments in $[-\tau^{2},\tau^{2}]
\cap (\vL - \vL)$, which is the \emph{self-consistency} part of the
renormalisation relations. As mentioned in the Introduction, we use the
term \emph{exact} to demarcate the relations from similar (and
widely used) concepts that are approximate or asymptotic in nature.

Let us begin with the latter, where we may further restrict ourselves
to non-negative values of $z$ due to Eq.~\eqref{eq:symm}. Observe that
\[
    (\vL-\vL) \cap [0,\tau+1] \, = \, \{ 0, 1, \tau, \tau+1 \} \ts ,
\]
which are thus the positions we have to consider first.  Inserting the
last relation of Eq.~\eqref{eq:Fibo-rec} into the first at $z=0$
implies $\nu^{}_{ab} (0) + \nu^{}_{ba} (0) = 0$, which gives
$\nu^{}_{ab} (0) = \nu^{}_{ba} (0) = 0$ via Eq.~\eqref{eq:symm}.  The
first relation yields $\nu^{}_{aa} (1) = 0$ becaue $\frac{1}{\tau}
\not\in \vL-\vL$. This implies $\nu^{}_{ab} (1) = 0$ via the third
relation, which also gives $\nu^{}_{aa} (-1) = \nu^{}_{ba} (-1) = 0$
by symmetry.  Since $\frac{1}{\tau}\not\in \vL-\vL$, the last
relation, at $z=1$, gives $\nu^{}_{bb} (1) = 0$, whence $\nu^{}_{ba}
(1)$ is the only correlation coefficient at $1$ still to be accounted
for.

The second relation implies $\nu^{}_{ab} (\tau) = \frac{1}{\tau}
\nu^{}_{aa} (0)$, while the first gives $\nu^{}_{aa} (\tau) =
\frac{1}{\tau} \nu^{}_{ba} (1)$. On the other hand, via the third
relation, we see that $ \nu^{}_{ba} (1) = \frac{1}{\tau} \bigl(
\nu^{}_{aa} (\tau) + \nu^{}_{ab} (\tau)\bigr) $, and hence, via
inserting the previous expressions for $\nu^{}_{ba} (1)$ and
$\nu^{}_{ab} (\tau)$, also that
\[
    \nu^{}_{aa} (\tau) \, = \, \tfrac{1}{\tau + 1} \bigl(
    \nu^{}_{aa} (\tau)  + \tfrac{1}{\tau}
    \nu^{}_{aa} (0) \bigr).
\]
This, in turn, gives $\nu^{}_{aa} (\tau) = \frac{1}{\tau^2}
\nu^{}_{aa} (0)$ and thus also $\nu^{}_{ba} (1) = \frac{1}{\tau}
\nu^{}_{aa} (0)$, so that all values at $z=1$ are now determined.
Since $2\not\in\vL-\vL$ and $\nu^{}_{aa} (1) = 0$, we get $\nu^{}_{ba}
(\tau) = \nu^{}_{bb} (\tau) = 0$, so that also all values at $z=\tau$
are accounted for.  The values $\nu^{}_{aa} (\tau+1)$, $\nu^{}_{ab}
(\tau+1)$ and $\nu^{}_{bb} (\tau+1)$ now follow recursively from
Eq.~\eqref{eq:Fibo-rec}. Finally, the third relation gives
\[
    \nu^{}_{ba} (\tau+1) \, = \, \tfrac{1}{\tau} \bigl(
    \nu^{}_{aa} (\tau+1) + \nu^{}_{ab} (\tau+1) \bigr)
\]
and thus fixes the value $\nu^{}_{ba} (\tau+1)$.  We thus have
determined all coefficients with arguments inside
$[-\tau^{2},\tau^{2}]$.

Note that, at $z=0$, the renormalisation relations reduce to
the eigenvector equation
\begin{equation}
   M \begin{pmatrix} \nu^{}_{aa} (0) \\ \nu^{}_{bb} (0)
   \end{pmatrix} \, = \, \tau \cdot \begin{pmatrix} 
   \nu^{}_{aa} (0) \\ \nu^{}_{bb} (0) \end{pmatrix}
   \label{eq:reno-zero}
\end{equation}
with the substitution matrix $M$ from above. Since the eigenspace of
the PF eigenvalue $\tau$ is one-dimensional, the solution of
Eq.~\eqref{eq:Fibo-rec} depends on a single number only, in line with
what we derived from the fourth relation. In summary, we have the
following result.

\begin{lemma}\label{lem:Fibo-rec}
  The renormalisation relations of Eq.~\eqref{eq:Fibo-rec}, subject to
  the symmetry relation of Eq.~\eqref{eq:symm} and the condition that
  all coefficients\/ $\nu^{}_{\!\alpha \beta} (z)$ vanish for any\/
  $z\not\in \vL-\vL$, have a unique solution once the value of\/
  $\nu^{}_{\! aa} (0) $ is given.  
\end{lemma}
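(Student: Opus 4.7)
The plan is to split the argument into two parts matching the \emph{self-consistency} block and the \emph{recursive} block of~\eqref{eq:Fibo-rec}. Throughout, the symmetry relation~\eqref{eq:symm} allows us to restrict to $z \ge 0$, and the support condition $\nu^{}_{\alpha\beta}(z) = 0$ for $z \notin \vL - \vL$ may be applied freely.

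For the self-consistency block, I would begin at $z = 0$. Substituting the last relation of~\eqref{eq:Fibo-rec} into the first and invoking~\eqref{eq:symm} forces $\nu^{}_{ab}(0) = \nu^{}_{ba}(0) = 0$, after which the two remaining equations at $z=0$ collapse to the eigenvector equation~\eqref{eq:reno-zero}, namely $M (\nu^{}_{aa}(0), \nu^{}_{bb}(0))^{T} = \tau (\nu^{}_{aa}(0), \nu^{}_{bb}(0))^{T}$. Since the Perron--Frobenius eigenvalue $\tau$ of $M$ is simple, $\nu^{}_{bb}(0) = \nu^{}_{aa}(0)/\tau$ is uniquely fixed in terms of $\nu^{}_{aa}(0)$. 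With these seed values in hand, the explicit stepwise computation carried out in the text preceding the lemma (cycling through the non-negative points of $(\vL-\vL) \cap [0, \tau^{2}]$, that is, $\{0, 1, \tau, \tau+1\}$) shows that all four coefficients at each such argument are uniquely determined. This disposes of the finite closed system on $[-\tau^{2}, \tau^{2}] \cap (\vL-\vL)$.

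For the recursive block, I would argue by strong induction on $|z|$ along the countable, uniformly discrete set $\vL - \vL$. The key quantitative input is the estimate
\[
   \max \bigl( \lvert z/\tau \rvert, \lvert z/\tau \pm 1 \rvert \bigr)
    \, \le \, \lvert z \rvert / \tau + 1 \, < \, \lvert z \rvert
    \quad \text{whenever} \quad \lvert z \rvert > \tau^{2},
\]
which follows from the identity $1 - 1/\tau = 1/\tau^{2}$. Consequently, for any such $z$ every argument appearing on the right-hand side of~\eqref{eq:Fibo-rec} has strictly smaller absolute value and has already been pinned down, either by the self-consistency step (if it falls in $[-\tau^{2}, \tau^{2}]$) or by the inductive hypothesis, with the support convention used to discard arguments outside $\vL - \vL$. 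Each of the four equations then yields $\nu^{}_{\alpha\beta}(z)$ explicitly, completing the induction.

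The main obstacle is really confined to the self-consistency block: one must verify that the finitely many linear equations tied to the points of $(\vL-\vL) \cap [-\tau^{2}, \tau^{2}]$ are mutually compatible and leave exactly one free parameter. This is handled by the case-by-case computation already displayed, and rests ultimately on the primitivity of the inflation together with the simplicity of the PF eigenvalue at $z = 0$. Once that finite block is settled, the recursive step is essentially automatic, and no circularity can occur because of the strict contraction $|z|/\tau + 1 < |z|$ for $|z| > \tau^{2}$.
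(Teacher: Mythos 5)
Your proposal is correct and follows essentially the same route as the paper: the finite self-consistency block on $[-\tau^{2},\tau^{2}]\cap(\vL-\vL)$ is settled by the explicit calculation preceding the lemma (starting from $\nu^{}_{ab}(0)=\nu^{}_{ba}(0)=0$ and the eigenvector equation~\eqref{eq:reno-zero}), and all remaining coefficients are then fixed recursively. Your explicit contraction estimate $\lvert z\rvert/\tau + 1 < \lvert z\rvert$ for $\lvert z\rvert>\tau^{2}$ merely makes precise the recursive step that the paper states without elaboration.
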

\begin{proof}
  The unique determination of all coefficients $\nu^{}_{\alpha \beta}
  (z)$, as a function of $\nu^{}_{aa} (0)$, for $z\in \vL-\vL$ with
  $\lvert z \rvert \le \tau + 1$ was derived above. For all remaining
  $z\in \vL-\vL$, the recursive structure of Eq.~\eqref{eq:Fibo-rec}
  then provides unique values for the coefficients under the
  assumptions made.
\end{proof}

\begin{remark}
  Note that we could have used a stronger restriction of the support,
  by defining one for each coefficient $\nu^{}_{\!\alpha\beta} (z)$
  separately, namely as $S_{\alpha \beta} = \vL_{\beta} -
  \vL_{\alpha}$, which once again would give the same set for any
  element of the hull.  Indeed, the set $\vL - \vL$ is a common
  superset of these individual supports. Nevertheless, this does not
  make any difference here, as the solution even with the larger
  support is the same, in the sense that the coefficients vanish at
  the additional points. Let us also recall \cite{Q,TAO} that strict
  ergodicity of our dynamical system implies that
  $\nu^{}_{\!\alpha\beta} (z) > 0$ if and only if $z\in
  S^{}_{\nts\alpha\beta}$.
\end{remark}

Let us mention that $\vL-\vL$ is itself a model set for the CPS
in Eq.~\eqref{eq:candp}, where one has $\vL-\vL = \oplam \bigl(
(-\tau,\tau)\bigr)$. Later, we shall need to work with model sets
with compact windows, such as $\oplam\bigl( [-\tau,\tau]\bigr)$.
The previous lemma can be extended as follows.

\begin{prop}\label{prop:Fibo-rec}
  The linear renormalisation relations of Eq.~\eqref{eq:Fibo-rec},
  subject to the symmetry relation of Eq.~\eqref{eq:symm} and the
  condition that all coefficients\/ $\nu^{}_{\alpha \beta} (z)$ vanish
  for any\/ $z\not\in \oplam\bigl([-\tau,\tau]\bigr)$, has a
  one-dimensional solution space. In other words, we get a unique
  solution once the value of\/ $\nu^{}_{aa} (0)$ is given.
\end{prop}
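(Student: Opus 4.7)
The plan is to reduce the statement directly to Lemma~\ref{lem:Fibo-rec} by showing that the finitely many new points admitted by the enlarged support condition are still forced to carry vanishing pair correlation coefficients. Since $\vL - \vL = \oplam\bigl((-\tau,\tau)\bigr)$ is the model set with the open window of radius $\tau$, the enlarged window $[-\tau,\tau]$ differs from it only on its boundary. A short computation in $\ZZ[\tau]$ identifies the two new candidate points as $\pm(1-\tau) = \mp\tfrac{1}{\tau}$, these being the unique elements of $\ZZ[\tau]$ whose star images are $\pm\tau$.

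I would then substitute $z = \tfrac{1}{\tau}$ into the four lines of Eq.~\eqref{eq:Fibo-rec} and track the three arguments that occur on the right-hand side, namely $\tfrac{z}{\tau} = 2-\tau$, $\tfrac{z}{\tau}+1 = 3-\tau$ and $\tfrac{z}{\tau}-1 = -\tfrac{1}{\tau}$. Their star images evaluate to $\tau^{2}$, $2+\tau$ and $\tau$, respectively, so the first two arguments lie strictly outside $[-\tau,\tau]$ and contribute zero, while the third lies on the boundary and hence leads back to one of our own unknowns. Three of the four lines of Eq.~\eqref{eq:Fibo-rec} then yield $\nu^{}_{\!aa}(\tfrac{1}{\tau}) = \nu^{}_{\!ba}(\tfrac{1}{\tau}) = \nu^{}_{\!bb}(\tfrac{1}{\tau}) = 0$ at once, whereas the remaining (second) line reduces, after applying Eq.~\eqref{eq:symm} on its right-hand side, to a self-referential equation of the form $\tau\ts \nu^{}_{\!ab}(\tfrac{1}{\tau}) = \nu^{}_{\!ab}(\tfrac{1}{\tau})$. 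This forces $\nu^{}_{\!ab}(\tfrac{1}{\tau}) = 0$ because $\tau \ne 1$, and the symmetry relation then propagates the vanishing to $z = -\tfrac{1}{\tau}$ as well.

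Once the coefficients at the two boundary points are shown to vanish, any solution of the Proposition's system is automatically supported in $\vL - \vL$ and therefore a solution of the system in Lemma~\ref{lem:Fibo-rec}, so the one-dimensional solution space transfers directly. I expect the only real subtlety to lie in the self-referential identity above: one has to verify that the same unknown occurs on both sides with different coefficients, which rests entirely on the algebraic fact $\tau - 1 = \tfrac{1}{\tau} \neq 0$ and thus on the irrationality of the inflation multiplier. Everything else is routine book-keeping with the renormalisation relations and the symmetry identity.
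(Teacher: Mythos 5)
Your argument is correct and follows the paper's own proof essentially verbatim: you identify $\pm\tfrac{1}{\tau}$ as the only points of $\oplam\bigl([-\tau,\tau]\bigr)\setminus(\vL-\vL)$, observe that the first, third and fourth relations send the argument to points whose star images ($\tau^{2}$ and $2+\tau$) lie outside the window, and resolve the remaining self-referential identity $\nu^{}_{ab}\bigl(\tfrac{1}{\tau}\bigr)=\tfrac{1}{\tau}\ts\nu^{}_{ab}\bigl(\tfrac{1}{\tau}\bigr)$ via the symmetry relation before handing the problem back to Lemma~\ref{lem:Fibo-rec}. The only cosmetic quibble is your closing attribution to irrationality: what the last step actually uses is merely that the coefficient $\tfrac{1}{\tau}$ differs from $1$.
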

\begin{proof}
  Observe first that $\oplam\bigl([-\tau,\tau]\bigr)\setminus
  (\vL-\vL) = \{ \pm \frac{1}{\tau} \}$, which are two points inside
  the interval $[-\tau^{2},\tau^{2}]$. Due to the symmetry relations,
  we only need to consider what the renormalisation relations of
  Eq.~\eqref{eq:Fibo-rec} impose at $z=\frac{1}{\tau}$.  One finds
  $\nu^{}_{bb} \bigl( \frac{1}{\tau}\bigr) = 0$ from the fourth
  relation, $\nu^{}_{ba} \bigl( \frac{1}{\tau}\bigr) = 0$ from the
  third and $\nu^{}_{aa} \bigl( \frac{1}{\tau}\bigr) = 0$ from the
  first, because the arguments on the right hand sides are then
  outside the set $\oplam\bigl([-\tau,\tau]\bigr)$. Furthermore, one
  obtains
\[
    \nu^{}_{ab} \bigl( \tfrac{1}{\tau}\bigr) \, = \,
     \tfrac{1}{\tau} \bigl( \nu^{}_{aa} \bigl(- \tfrac{1}{\tau}\bigr)
    + \nu^{}_{ba} \bigl( -\tfrac{1}{\tau}\bigr) \bigr)
    \, = \, \tfrac{1}{\tau} \nu^{}_{ab} \bigl( \tfrac{1}{\tau}\bigr) 
\]
by the previous identities and the symmetry relation. Clearly, this
implies $\nu^{}_{ab} \bigl( \frac{1}{\tau}\bigr) = 0$, which brings
us back to the situation of Lemma~\ref{lem:Fibo-rec} and the claim
is proved.
\end{proof}

\begin{remark}\label{rem:Fibo-sym}
  Both in Lemma~\ref{lem:Fibo-rec} and in
  Proposition~\ref{prop:Fibo-rec}, we started from conditions that are
  satisfied by the pair correlation coefficients of the Fibonacci
  chain. In particular, the symmetry condition \eqref{eq:symm} was
  imposed. Interestingly, one can dispense with it as follows. If one
  considers the relations \eqref{eq:Fibo-rec} under the sole condition
  that all coefficients $\nu^{}_{\!\alpha\beta} (z)$ vanish for any point
  $z\not\in \varLambda -\varLambda$, but with no further assumption on
  the symmetry, the solution space is still one-dimensional. This can
  be proved explicitly by a slight extension of our calculations.
  Clearly, any solution can uniquely be written as the sum of a
  function that is symmetric under exchanging the indices and
  simultaneously inverting the argument with another function that is
  anti-symmetric under this operation. Since we already know a
  symmetric solution, we may conclude that the only anti-symmetric
  solution to Eq.~\eqref{eq:Fibo-rec} is the trivial one in this
  example.
\end{remark}

To proceed, let us introduce the measures
\[
    \bs{\nu}^{}_{\!\alpha \beta} \, := \,
    \sum_{z\in\vL-\vL} \nu^{}_{\alpha \beta} (z) \, \delta^{}_{z} \ts .
\]
These are translation bounded pure point measures with
$\bs{\nu}^{}_{\!\alpha \beta} (\{ z \} ) = \nu^{}_{\alpha \beta} (z)$.
Defining the invertible continuous function $f$ by $f(x) = \tau \ts x$
and its action $f\! .\ts \mu$ on a measure $\mu$ as usual by $(f\!
.\ts \mu)(g) := \mu(g\circ f)$, one finds $f\! .\ts \delta^{}_{x} =
\delta^{}_{\!  f(x)}$.  With this, a short calculation shows that one
can rewrite the recursion relations of Eq.~\eqref{eq:Fibo-rec} in
measure-valued form as
\begin{equation}\label{eq:Fibo-meas}
\begin{split}
   \bs{\nu}^{}_{\! aa} \, & = \, \tfrac{1}{\tau} \bigl(
   f\! .\ts \bs{\nu}^{}_{\! aa} + f\! .\ts \bs{\nu}^{}_{\! ab} +
   f\! .\ts \bs{\nu}^{}_{\! ba} + f\! .\ts \bs{\nu}^{}_{\! bb} \bigr) \\
   \bs{\nu}^{}_{\! ab} \, & = \, \tfrac{1}{\tau} \delta^{}_{\tau} * \bigl(
   f\! .\ts \bs{\nu}^{}_{\! aa} + f\! .\ts \bs{\nu}^{}_{\! ba} \bigr) \\
   \bs{\nu}^{}_{\! ba} \, & = \, \tfrac{1}{\tau} 
         \delta^{}_{-\tau} * \bigl(
   f\! .\ts \bs{\nu}^{}_{\! aa} + f\! .\ts \bs{\nu}^{}_{\! ab} \bigr) \\
   \bs{\nu}^{}_{\! bb} \, & = \, \tfrac{1}{\tau}
   \bigl(f\! .\ts \bs{\nu}^{}_{\! aa}  \bigr)
\end{split}
\end{equation}
where it is understood that the support of the measures on the left
hand sides is contained in $\vL-\vL$, which is uniformly discrete.
Let us note in passing that Eq.~\eqref{eq:Fibo-meas} can also be
written as a matrix convolution identity.

\begin{coro}
  Consider the renormalisation relations of Eq.~\eqref{eq:Fibo-meas},
  subject to the symmetry conditions\/
  $\widetilde{\bs{\nu}^{}_{\!\alpha \beta}} = \bs{\nu}^{}_{\!\beta
    \alpha}$ and the requirement that each measure\/
  $\bs{\nu}^{}_{\!\alpha\beta}$ is a pure point measure with support
  in\/ $\vL-\vL$. Then, there is a non-trivial solution of this system
  of equations, which is unique once the initial condition\/
  $\bs{\nu}^{}_{\!aa} \bigl( \{ 0 \} \bigr)$ is specified. The same
  conclusion holds if the support is allowed to be\/ $\oplam\bigl(
  [-\tau,\tau]\bigr)$.
\end{coro}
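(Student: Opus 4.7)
The plan is to reduce the measure-valued statement to the scalar version already established in Lemma~\ref{lem:Fibo-rec} and Proposition~\ref{prop:Fibo-rec}, via the obvious correspondence between pure point measures and their coefficient functions. First I would observe that both $\vL-\vL$ and $\oplam\bigl([-\tau,\tau]\bigr)$ are uniformly discrete, so any pure point measure $\bs{\nu}$ with support in one of these sets is uniquely determined by the map $z\mapsto\bs{\nu}(\{z\})$. Setting $\nu^{}_{\!\alpha\beta}(z) := \bs{\nu}^{}_{\!\alpha\beta}(\{z\})$ thus sets up a bijection between measure-valued and scalar-valued candidate solutions.

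Next I would verify that under this bijection, Eq.~\eqref{eq:Fibo-meas} is equivalent to Eq.~\eqref{eq:Fibo-rec}. The identity $f\!.\ts\delta^{}_{x} = \delta^{}_{\tau x}$ extends by linearity and continuity to pure point measures with uniformly discrete support, yielding $(f\!.\ts\bs{\nu}^{}_{\!\alpha\beta})(\{z\}) = \nu^{}_{\!\alpha\beta}(z/\tau)$, and convolution by $\delta^{}_{\pm\tau}$ translates the argument so that $(\delta^{}_{\pm\tau} * f\!.\ts\bs{\nu}^{}_{\!\alpha\beta})(\{z\}) = \nu^{}_{\!\alpha\beta}(z/\tau \mp 1)$. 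Extracting the coefficient at each $z$ from the four lines of Eq.~\eqref{eq:Fibo-meas} reproduces exactly Eq.~\eqref{eq:Fibo-rec}. Likewise, the tilde condition $\widetilde{\bs{\nu}^{}_{\!\alpha\beta}} = \bs{\nu}^{}_{\!\beta\alpha}$ unpacks to $\overline{\nu^{}_{\!\alpha\beta}(-z)} = \nu^{}_{\!\beta\alpha}(z)$, which specialises to the real-valued symmetry \eqref{eq:symm} precisely when all coefficients are real.

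For existence, given a prescribed value $c \in \RR$ for $\bs{\nu}^{}_{\!aa}(\{0\})$, Lemma~\ref{lem:Fibo-rec} (resp.~Proposition~\ref{prop:Fibo-rec}) produces a unique family of real-valued scalar coefficients $\nu^{}_{\!\alpha\beta}(z)$ with $\nu^{}_{\!aa}(0)=c$ that satisfies \eqref{eq:Fibo-rec}, \eqref{eq:symm} and the appropriate support condition; the associated measures $\bs{\nu}^{}_{\!\alpha\beta} = \sum_{z} \nu^{}_{\!\alpha\beta}(z)\ts\delta^{}_{z}$ are then pure point, translation bounded (since their coefficients are uniformly bounded and the supports are uniformly discrete), and they solve the measure-valued system. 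For uniqueness, given two such solutions with the same value of $\bs{\nu}^{}_{\!aa}(\{0\})$, their real and imaginary parts both satisfy Eq.~\eqref{eq:Fibo-meas} (which has real coefficients) and the symmetry relation, so the scalar uniqueness from Lemma~\ref{lem:Fibo-rec}, applied component-wise to real and imaginary parts, forces the two solutions to coincide.

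The argument is essentially a translation exercise, so the only point to handle with care is the interplay between the tilde operation (which carries complex conjugation) and the $\star$-symmetry \eqref{eq:symm}, together with the need to allow complex-valued measures in the abstract formulation while all scalar work from the earlier lemmas takes place over $\RR$. Decomposing a putative complex solution into real and imaginary parts and invoking the scalar uniqueness on each neatly circumvents this and completes the reduction.
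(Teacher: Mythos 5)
Your reduction of the measure-valued statement to Lemma~\ref{lem:Fibo-rec} and Proposition~\ref{prop:Fibo-rec} is exactly the route the paper takes; its own proof consists of two sentences asserting that the tilde condition reformulates Eq.~\eqref{eq:symm} in the pure point setting and that the claim then follows from those two results. Your version is a fleshed-out account of the same reduction, and the dictionary you set up between Eq.~\eqref{eq:Fibo-meas} and Eq.~\eqref{eq:Fibo-rec} is correct.

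The one step that would fail as written is the uniqueness argument for the imaginary part. Unpacking $\widetilde{\bs{\nu}^{}_{\!\alpha\beta}} = \bs{\nu}^{}_{\!\beta\alpha}$ at the level of coefficients gives $\overline{\nu^{}_{\alpha\beta}(-z)} = \nu^{}_{\beta\alpha}(z)$; writing $\nu^{}_{\alpha\beta} = u^{}_{\alpha\beta} + \ii\ts v^{}_{\alpha\beta}$ with $u,v$ real, the real part satisfies $u^{}_{\alpha\beta}(-z) = u^{}_{\beta\alpha}(z)$, which is \eqref{eq:symm}, but the imaginary part satisfies $v^{}_{\alpha\beta}(-z) = - v^{}_{\beta\alpha}(z)$, i.e.\ it is \emph{anti}-symmetric under the combined index swap and reflection. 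So your claim that real and imaginary parts ``both satisfy the symmetry relation'' is false for $v$, and Lemma~\ref{lem:Fibo-rec} cannot be applied to $v$ directly, since its hypothesis is not met. The gap is easily closed: by Remark~\ref{rem:Fibo-sym}, the solution space of \eqref{eq:Fibo-rec} with the support condition alone (no symmetry assumed) is still one-dimensional and is spanned by a symmetric solution, so the only anti-symmetric solution is trivial and $v\equiv 0$. (Alternatively, one may simply take the measures to be real from the outset, which is what the paper implicitly does.) With that repair, your argument is complete and coincides with the paper's.
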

\begin{proof}
  Note first that, under the restriction to pure point measures, the
  condition $\widetilde{\bs{\nu}^{}_{\!\alpha \beta}} =
  \bs{\nu}^{}_{\!\beta \alpha}$ is just the reformulation of the
  symmetry relations from Eq.~\eqref{eq:symm} in this setting. The
  claim now follows from Lemma~\ref{lem:Fibo-rec} and
  Proposition~\ref{prop:Fibo-rec}.
\end{proof}

Now, observing the identity
\[
   \bs{\nu}^{}_{\! \alpha \beta} \, = \, 
   \frac{\widetilde{\delta^{}_{\!\vL_{\alpha}}} \!\circledast \ts
   {\delta^{}_{\!\vL_{\beta}}}}{\dens (\vL) } \ts ,
\]
we know from Lemma~\ref{lem:transformable} that all measures in
Eq.~\eqref{eq:Fibo-meas} are transformable, where
$\widetilde{\bs{\nu}^{}_{\!\alpha \beta}} = \bs{\nu}^{}_{\!\beta
  \alpha}$ leads to $\overline{\widehat{\bs{\nu}^{}_{\!
      \alpha\beta}}}=\widehat{\bs{\nu}^{}_{\! \beta\alpha}} $.  Since
$\widehat{f\! . \ts \mu} = \frac{1}{\tau} f^{-1}\! . \ts
\widehat{\mu}$, a Fourier transform of the relations in
Eq.~\eqref{eq:Fibo-meas} gives
\begin{equation}\label{eq:Fibo-FT}
\begin{split}
   \widehat{\bs{\nu}^{}_{\! aa}} \, & = \, \tfrac{1}{\tau^{2}}
   f^{-1}\! . \bigl( \widehat{\bs{\nu}^{}_{\! aa}} + 
   \widehat{\bs{\nu}^{}_{\! ab}} + \widehat{ \bs{\nu}^{}_{\! ba}} +  
   \widehat{\bs{\nu}^{}_{\! bb}} \bigr) \\
    \widehat{\bs{\nu}^{}_{\! ab}} \, & = \, \tfrac{1}{\tau^{2}} \,
   \ee^{-2\pi\ii \tau (.)} f^{-1}\! .\bigl(
   \widehat{\bs{\nu}^{}_{\! aa}} + \widehat{\bs{\nu}^{}_{\! ba}} \bigr) \\
    \widehat{\bs{\nu}^{}_{\! ba}} \, & = \, \tfrac{1}{\tau^{2}} \,
   \ee^{2\pi\ii \tau (.)} f^{-1}\! .\bigl(
   \widehat{\bs{\nu}^{}_{\! aa}} + \widehat{\bs{\nu}^{}_{\! ab}} \bigr) \\
    \widehat{\bs{\nu}^{}_{\! bb}} \, & = \, \tfrac{1}{\tau^{2}}
   f^{-1}\! . \widehat{ \bs{\nu}^{}_{\! bb} } \ts .
\end{split}
\end{equation}

Let us check the consistency of Eq.~\eqref{eq:Fibo-FT} with the model
set description that is available here.  From the latter, we know that
all $\widehat{\bs{\nu}^{}_{\! \alpha \beta}}$ are pure point measures
\cite{TAO}. In particular, we have $\widehat{\bs{\nu}^{}_{\! \alpha
    \beta}} = \sum_{k\in\cF} I^{}_{\alpha \beta} (k) \, \delta^{}_{k}$
with Fourier module $\cF = \ZZ[\tau]/\sqrt{5}$ and intensities
$I^{}_{\alpha \beta} (k) = \widehat{\bs{\nu}^{}_{\! \alpha \beta}} (\{
k \} )$.  An explicit calculation on the basis of
\cite[Sec.~9.4.1]{TAO}, adjusted to our use of relative frequencies,
results in the relations
\begin{equation}\label{eq:Fibo-intens}
\begin{split}
  I^{}_{aa} (k) \, & = \, \tfrac{1}{\tau^2} \sinc \bigl(\pi k^\star\bigr)^{2} \\
  I^{}_{ab} (k) \, & = \, \tfrac{1}{\tau^3} \, \ee^{-\pi\ii\tau k^\star}
   \sinc \bigl(\pi k^\star\bigr) \sinc \bigl(\tfrac{\pi k^\star}{\tau}\bigr) \\
  I^{}_{ba} (k) \, & = \, \tfrac{1}{\tau^3} \, \ee^{\pi\ii\tau k^\star}
   \sinc \bigl(\pi k^\star\bigr) \sinc \bigl(\tfrac{\pi k^\star}{\tau}\bigr) \\
  I^{}_{bb} (k) \, & = \, \tfrac{1}{\tau^{4}} 
     \sinc \bigl(\tfrac{\pi k^\star}{\tau}\bigr)^{2}
\end{split}
\end{equation}
where $I^{}_{\alpha \beta} (k) = \overline{I^{}_{\beta \alpha} (k)} =
I^{}_{\beta \alpha} (-k)$. Here, we have $I^{}_{aa} (0) =
(\nu^{}_{\nts aa} (0))^{2}$ and $I^{}_{bb} (0) = (\nu^{}_{bb}
(0))^{2}$, together with $\sum_{\alpha,\beta} I^{}_{\alpha\beta} (0) =
1$ in line with our frequency normalisation.  Note also that the
$2\!\times\! 2$-matrix $ \cI (k) := ( I^{}_{\alpha\beta}(k))$, at any
fixed $k$, is only Hermitian, not real in general. In fact, the term
`intensity' is only justified for the index pairs $aa$ and $bb$, where
one has, up to a factor $(\dens (\vL))^{2}$, the diffraction
intensities of the Dirac combs $\delta^{}_{\!\vL_{a}}$ and
$\delta^{}_{\!\vL_{b}}$, respectively. Still, the matrix $\cI (k)$ is
positive semi-definite, with $\det ( \cI (k)) = 0$ for all $k\in\RR$.

\begin{prop}\label{prop:Fibo-intens}
  For all\/ $k\in\cF$, where\/ $\cF = \ZZ[\tau]/\sqrt{5}$ is the additive
  spectrum from Eq.~\eqref{eq:Fib-spec}, the intensity functions of
  Eq.~\eqref{eq:Fibo-intens} satisfy the relations
\[
   \begin{split}
  I^{}_{aa} (k) \, & = \, \tfrac{1}{\tau^{2}} \bigl(
    I^{}_{aa} (\tau k) +  I^{}_{ab} (\tau k) + I^{}_{ba} (\tau k) 
    + I^{}_{bb} (\tau k) \bigr),\\
  I^{}_{ab} (k) \, & = \, \tfrac{1}{\tau^{2}} \, \ee^{-2\pi\ii\tau k}
    \bigl(I^{}_{aa} (\tau k) +  I^{}_{ba} (\tau k) \bigr), \\
  I^{}_{ba} (k) \, & = \, \tfrac{1}{\tau^{2}} \, \ee^{2\pi\ii\tau k}
   \bigl(I^{}_{aa} (\tau k) +  I^{}_{ab} (\tau k) \bigr),
   \quad \text{and} \\
  I^{}_{bb} (k) \, & = \, \tfrac{1}{\tau^{2}} \, I^{}_{aa} (\tau k)\ts .
\end{split}
\]
\end{prop}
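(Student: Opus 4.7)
The plan is to evaluate the measure-valued identities of Eq.~\eqref{eq:Fibo-FT} at individual points $k\in\cF$, reading off the coefficient of $\delta^{}_{k}$ on both sides. All four measures $\widehat{\bs{\nu}^{}_{\!\alpha\beta}}$ are pure point with support in $\cF$, so we may write $\widehat{\bs{\nu}^{}_{\!\alpha\beta}} = \sum_{k\in\cF} I^{}_{\alpha\beta}(k)\, \delta^{}_{k}$, and the desired relations are just the coefficient-wise reformulations of Eq.~\eqref{eq:Fibo-FT}.

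First I would record how the operation $f^{-1}\!.\,(\cdot)$ with $f(x)=\tau x$ acts on point measures. Since $(f^{-1}\!.\,\delta^{}_{x})(g) = \delta^{}_{x}(g\circ f^{-1}) = g(x/\tau)$, one has $f^{-1}\!.\,\delta^{}_{x} = \delta^{}_{x/\tau}$, whence
\[
  f^{-1}\!.\,\widehat{\bs{\nu}^{}_{\!\alpha\beta}} \, = \,
  \sum_{k\in\cF} I^{}_{\alpha\beta}(k)\,\delta^{}_{k/\tau}
  \, = \, \sum_{k'\in\cF} I^{}_{\alpha\beta}(\tau k')\, \delta^{}_{k'} \ts ,
\]
where the reindexing is legitimate because $\tau\cF\subseteq \cF$; indeed $\tau\cdot \ZZ[\tau]\subseteq\ZZ[\tau]$, so $\tau k\in\cF$ whenever $k\in\cF$. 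Multiplication by the continuous function $\ee^{\pm 2\pi\ii\tau(\cdot)}$ leaves a pure point measure pure point and multiplies its point mass at $k$ by the scalar $\ee^{\pm 2\pi\ii\tau k}$.

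Combining these two observations, reading off the coefficient of $\delta^{}_{k}$ on both sides of the four identities of Eq.~\eqref{eq:Fibo-FT} yields precisely the four equations claimed in the proposition: the $aa$-line follows directly, the $ab$- and $ba$-lines pick up the phase factors $\ee^{\mp 2\pi\ii \tau k}$, and the $bb$-line gives $I^{}_{bb}(k)=\tfrac{1}{\tau^{2}} I^{}_{aa}(\tau k)$ (which is the Fourier counterpart of the fourth line of \eqref{eq:Fibo-meas}, $\bs{\nu}^{}_{\!bb}=\tfrac{1}{\tau}\ts f\!.\,\bs{\nu}^{}_{\!aa}$).

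The only real obstacle is bookkeeping: one must be careful with the direction of the scaling $f$ versus $f^{-1}$ and with the signs of the phase factors, and one must verify the invariance $\tau\cF\subseteq\cF$ so that the index substitution is meaningful. As a sanity check, I would verify the four identities directly against the closed-form intensities in Eq.~\eqref{eq:Fibo-intens}, using $(\tau k)^{\star}=\tau^{\star}k^{\star}=-k^{\star}/\tau$ and the elementary identity $\sinc(\pi x) = \tfrac{1}{\tau}\bigl(\sinc(\pi x/\tau) + \ee^{\pm\pi\ii x/\tau}\sinc(\pi x/\tau^{2})\bigr)$-type relations obtained from $\sin(\pi x)=\sin(\pi x/\tau)\cos(\pi x/\tau^{2})+\cos(\pi x/\tau)\sin(\pi x/\tau^{2})$ with $1=1/\tau+1/\tau^{2}$; this cross-check confirms both the proposition and the consistency of the renormalisation scheme with the model set computation.
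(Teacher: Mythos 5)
Your main argument is correct, but it takes a genuinely different route from the paper. You obtain the four relations as the coefficient-wise reading of the measure identity \eqref{eq:Fibo-FT}, using that the $\widehat{\bs{\nu}^{}_{\!\alpha\beta}}$ are pure point with support in $\cF$, that $f^{-1}\!.\,\delta^{}_{x}=\delta^{}_{x/\tau}$, and that $\tau\cF=\cF$ (note that the reindexing really needs this \emph{equality} --- which holds because $\tau$ is a unit in $\ZZ[\tau]$ --- and not merely the inclusion $\tau\cF\subseteq\cF$ you cite); you also, correctly, take the fourth line from \eqref{eq:Fibo-meas} rather than from \eqref{eq:Fibo-FT}, whose last line has $\widehat{\bs{\nu}^{}_{\!bb}}$ where $\widehat{\bs{\nu}^{}_{\!aa}}$ is meant. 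The paper instead verifies the relations \emph{directly} on the closed-form expressions \eqref{eq:Fibo-intens}; the essential input there is that $k\in\cF$ implies $k+k^{\star}\in\ZZ$, so that the direct-space phase $\ee^{-2\pi\ii\tau k}$ equals $\ee^{-2\pi\ii k^{\star}\!/\tau}$ and both sides become functions of $k^{\star}$ alone. The difference is not cosmetic: the proposition is intended as a consistency check between the renormalisation scheme and the independently computed model-set intensities, and your derivation from \eqref{eq:Fibo-FT} presupposes exactly the identification of \eqref{eq:Fibo-intens} with the point masses of $\widehat{\bs{\nu}^{}_{\!\alpha\beta}}$ that the check is meant to corroborate. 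So your route is shorter and logically valid as a proof of the literal statement, but it buys less; the paper's route is a genuine cross-check.

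Your closing ``sanity check'' is in fact the paper's proof, and there two points need repair. First, the splitting identity is not quite what you wrote: from $\sin(A+B)=\sin(A)\ts\ee^{-\ii B}+\sin(B)\ts\ee^{\ii A}$ with $A=\pi x/\tau$, $B=\pi x/\tau^{2}$ one gets
\[
   \sinc(\pi x)\, = \, \tfrac{1}{\tau}\,\ee^{-\pi\ii x/\tau^{2}}
   \sinc\bigl(\tfrac{\pi x}{\tau}\bigr)
   + \tfrac{1}{\tau^{2}}\,\ee^{\ts\pi\ii x/\tau}
   \sinc\bigl(\tfrac{\pi x}{\tau^{2}}\bigr),
\]
with coefficients $\tau^{-1}$ and $\tau^{-2}$ and a phase on \emph{each} term. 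Second, and more importantly, the substitution $(\tau k)^{\star}=-k^{\star}\!/\tau$ alone does not suffice: the phase factors $\ee^{\mp 2\pi\ii\tau k}$ in the second and third relations live in direct space while \eqref{eq:Fibo-intens} is expressed in internal space, and reconciling them requires the arithmetic fact $k+k^{\star}\in\ZZ$ on $\cF$. This is precisely why the proposition is stated for $k\in\cF$ rather than for all real $k$.
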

\begin{proof}
  The last relation is obvious from Eq.~\eqref{eq:Fibo-intens}, as
  $(\tau k)^\star = - k^\star/\tau$ and $I^{}_{aa}(k)$ is symmetric under
  $k\mapsto -k$. For the other three, one needs some less obvious
  calculations. To do so, one has to use the fact that $k\in\cF$
  implies $k+k^\star\in\ZZ$, so that
\[
   \ee^{-2\pi\ii \tau k} \, = \, \ee^{2\pi\ii (\tau k)^\star}
   \, = \, \ee^{-2\pi\ii \frac{k^\star}{\tau}}.
\]
  The remaining explicit steps are now standard, and hence left to
  the reader.
\end{proof}

Let us mention in passing that 
Proposition~\ref{prop:Fibo-intens} also entails the scaling relation
\[
     \det (\cI (\tau k)) \, = \, \tau^{4} \, \det (\cI (k)) 
\]
for the intensity matrix.  Since the intensities are clearly bounded,
this is only compatible with $\det(\cI(k)) = 0$, as calculated earlier
from Eq.~\eqref{eq:Fibo-intens}.

Looking again at Eq.~\eqref{eq:Fibo-FT} one realises that it can
also be written in matrix form as
\begin{equation}\label{eq:Fibo-FT-matrix}
   \begin{pmatrix} \widehat{\bs{\nu}^{}_{\! aa}} \\
   \widehat{\bs{\nu}^{}_{\! ab}} \\ \widehat{\bs{\nu}^{}_{\! ba}} \\
   \widehat{\bs{\nu}^{}_{\! bb}} \end{pmatrix} \, = \,
    \frac{1}{\tau^{2}}\, \bs{A}(.) 
   \begin{pmatrix} f^{-1} \! . \ts\widehat{\bs{\nu}^{}_{\! aa}} \\
   f^{-1} \! . \ts\widehat{\bs{\nu}^{}_{\! ab}} \\ 
   f^{-1} \! . \ts \widehat{\bs{\nu}^{}_{\! ba}} \\
   f^{-1} \! . \ts \widehat{\bs{\nu}^{}_{\! bb}} \end{pmatrix} ,
\end{equation}
or $\widehat{\bs{\nu}} = \tau^{-2} \bs{A}(.) (f^{-1}\!
.\ts\widehat{\bs{\nu}})$ for short, with the matrix function
\[
    \bs{A}(k) \, = \, \begin{pmatrix} 1 & 1 & 1 & 1 \\
    \ee^{-2\pi\ii\tau k} & 0 & \ee^{-2\pi\ii\tau k} & 0 \\
     \ee^{2\pi\ii\tau k} & \ee^{2\pi\ii\tau k} & 0 & 0 \\
    1 & 0 & 0 & 0 \end{pmatrix} \, = \,  
    B(k) \otimes \overline{B(k)} \ts .
\]
Here, $\otimes$ denotes the Kronecker product (the representation of
the tensor product in the standard lexicographic choice of basis), and
$B(k)$ is the matrix function
\begin{equation}\label{eq:Fibo-family}
    B(k) \, = \, \begin{pmatrix} 1 & 1 \\
    \ee^{2\pi\ii\tau k} & 0 \end{pmatrix} \, = \,
    \begin{pmatrix} 1 & 1 \\ 0 & 0 \end{pmatrix} 
    + \ee^{2\pi\ii\tau k}  \begin{pmatrix}
    0 & 0 \\ 1 & 0 \end{pmatrix} ,
\end{equation}
where $B(0)=M$ is the substitution matrix of the Fibonacci
rule from above. Below, we will refer to $B(k)$ as the \emph{Fourier
matrix} of the inflation. The name is chosen to reflect the fact
that $B(k)$ is the matrix of phase factors that emerge from the
relative shifts of tiles within their level-$1$ supertiles.

\begin{lemma}\label{lem:B-irred}
  The matrix family\/ $\cB_{\varepsilon} := \{ B(k) \mid 0\le k <
  \varepsilon \}$ is\/ $\CC$-irreducible for any\/ $\varepsilon > 0$.
\end{lemma}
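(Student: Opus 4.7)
The plan is to exploit the fact that we are dealing with $2 \times 2$ matrices, so that $\CC$-irreducibility of $\cB_{\varepsilon}$ is equivalent to the absence of a common nontrivial invariant subspace, i.e., the absence of a common eigenvector for all $B(k)$ with $0 \le k < \varepsilon$. I would then assume for contradiction that such a common eigenvector exists and derive a constraint that fails as soon as $\varepsilon > 0$.

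Concretely, suppose $v = (v_1, v_2)^{T} \in \CC^{2} \setminus \{0\}$ satisfies $B(k) \ts v \in \CC \ts v$ for every $k \in [0,\varepsilon)$, i.e., $B(k) v = \lambda(k) v$ for some scalar $\lambda(k)$ depending on $k$. Reading off the two components yields
\[
   v_{1} + v_{2} \, = \, \lambda(k) \ts v_{1} \quad \text{and} \quad
   \ee^{2\pi\ii\tau k} v_{1} \, = \, \lambda(k) \ts v_{2}
\]
for all such $k$. A short case analysis handles $v_{1}=0$ first: then the first relation forces $v_{2}=0$, contradicting $v\neq 0$. Hence $v_{1}\neq 0$, and the first relation fixes $\lambda(k) = 1 + v_{2}/v_{1}$, a constant independent of $k$.

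Plugging this back into the second relation gives $\ee^{2\pi\ii\tau k} v_{1} = \lambda \ts v_{2}$ for every $k\in[0,\varepsilon)$, with $v_{1}\neq 0$ and $\lambda, v_{2}$ fixed. This would require $k \mapsto \ee^{2\pi\ii\tau k}$ to be constant on $[0,\varepsilon)$, which manifestly fails for any $\varepsilon > 0$ since $\tau \neq 0$. The contradiction shows that no common eigenvector exists, whence $\cB_{\varepsilon}$ has no common nontrivial invariant subspace and is $\CC$-irreducible.

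I do not foresee a real obstacle here; the one point that deserves a sentence in the write-up is the reduction from $\CC$-irreducibility to the nonexistence of a common eigenvector, which is automatic in dimension $2$ because the only possible nontrivial invariant subspaces are one-dimensional.
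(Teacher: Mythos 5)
Your proof is correct, but it follows a genuinely different route from the paper's. You argue directly at the level of invariant subspaces: since the matrices are $2\times 2$, any nontrivial common invariant subspace is a line, hence a common eigenvector, and your explicit component computation (with the eigenvalue $\lambda(k)$ allowed to depend on $k$) rules this out because $k \mapsto \ee^{2\pi\ii\tau k}$ is non-constant on any interval of positive length. The paper instead works with the \emph{algebra} generated by $\cB_{\varepsilon}$: using the decomposition $B(k) = D_{0} + \ee^{2\pi\ii\tau k} D_{\tau}$ and the linear independence of the functions $1$ and $\ee^{2\pi\ii\tau k}$ on $[0,\varepsilon)$, it identifies this algebra with the one generated by the two digit matrices $D_{0}$ and $D_{\tau}$, and then checks that these generate all of $\Mat(2,\CC)$. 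Both arguments are valid; yours is more elementary and self-contained, while the paper's is set up so that it simultaneously proves the subsequent Corollary that the IDA is the full matrix algebra (your version would still yield that via Burnside's theorem, but only indirectly), and it scales to the higher-dimensional Fourier matrices appearing later in the paper, where ``reducible'' is no longer equivalent to ``common eigenvector''. The one point you flag --- the reduction to common eigenvectors --- is indeed the only place where your argument is special to dimension $2$, and you justify it adequately.
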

\begin{proof}
  Let $\varepsilon >0$ be fixed.  Irreducibility of
  $\cB_{\varepsilon}$ means that the only simultaneous invariant
  subspaces of the entire family are the trivial spaces, $\{ 0 \}$ and
  $\CC^{2}$. The algebra generated by $\cB_{\varepsilon}$ does not
  depend on $\varepsilon$, and equals the (complex) algebra generated
  by the two matrices
\[
    D^{}_{0} \, = \, \begin{pmatrix} 1 & 1 \\ 0 & 0
    \end{pmatrix}  \quad \text{and} \quad
    D^{}_{\tau} \, = \, \begin{pmatrix} 0 & 0 \\ 1 & 0
    \end{pmatrix} ,
\]
as is immediate from the representation in Eq.~\eqref{eq:Fibo-family}.
It is routine to check (via products and linear combinations) that
these two matrices generate the ring $\Mat (2,\CC)$ with unit group
$\GL (2,\CC)$, which is clearly irreducible as a matrix group.
\end{proof}

In fact, as we shall see in our later examples, the complex algebra
generated by the (generalised) digit matrices $D_{0}$ and $D_{\tau}$
contains important information about the inflation $\varrho$; compare
\cite{Vince} for a justification of our terminology.  In view of the
meaning of the $D$-matrices, we call this algebra the \emph{inflation
  displacement algebra} of $\varrho$, or IDA for short.  The proof of
Lemma~\ref{lem:B-irred} then also shows the following result.

\begin{coro}
  The IDA of the Fibonacci inflation rule\/ $\varrho =
  \varrho^{}_{\mathrm{F}}$ is the full matrix algebra\/ $\Mat
  (2,\CC)$, and thus irreducible. \qed
\end{coro}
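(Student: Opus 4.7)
The plan is to exhibit explicitly enough products of $D^{}_{0}$ and $D^{}_{\tau}$ to span the four-dimensional space $\Mat(2,\CC)$. Since $\Mat(2,\CC)$ is a simple algebra of dimension $4$, once I produce four linearly independent elements lying inside the subalgebra generated by $D^{}_{0}$ and $D^{}_{\tau}$, equality with $\Mat(2,\CC)$ is forced, and irreducibility as a matrix algebra on $\CC^{2}$ is standard (the only invariant subspaces of the full matrix algebra are $\{0\}$ and $\CC^{2}$).

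Concretely, I would compute the two products $D^{}_{0}D^{}_{\tau}$ and $D^{}_{\tau}D^{}_{0}$ directly from the explicit forms of $D^{}_{0}$ and $D^{}_{\tau}$ displayed in the proof of Lemma~\ref{lem:B-irred}. A short calculation shows that $D^{}_{0}D^{}_{\tau}$ is the matrix unit $E^{}_{11}$, while $D^{}_{\tau}D^{}_{0} = E^{}_{21} + E^{}_{22}$. Combined with $D^{}_{\tau} = E^{}_{21}$ and $D^{}_{0} = E^{}_{11} + E^{}_{12}$, one obtains all four standard matrix units $E^{}_{ij}$ as linear combinations of $D^{}_{0}$, $D^{}_{\tau}$, $D^{}_{0}D^{}_{\tau}$ and $D^{}_{\tau}D^{}_{0}$. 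This realises $\Mat(2,\CC)$ inside the IDA, and since the IDA is by definition contained in $\Mat(2,\CC)$, equality follows.

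For irreducibility, I would simply note that any nontrivial proper invariant subspace of $\CC^{2}$ is one-dimensional, spanned by some $v \neq 0$; but then $\Mat(2,\CC)\cdot v = \CC^{2}$, a contradiction. Hence the IDA acts irreducibly on $\CC^{2}$.

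I do not expect any serious obstacle here: the bulk of the work has already been performed in Lemma~\ref{lem:B-irred}, which establishes that the complex algebra generated by the constant and $k$-dependent parts of $B(k)$ (namely $D^{}_{0}$ and $D^{}_{\tau}$) coincides with the full $\Mat(2,\CC)$. The corollary is really just a repackaging of that observation in the language of the IDA, so the proof is essentially an explicit two-line matrix computation together with the trivial irreducibility statement for the full matrix algebra.
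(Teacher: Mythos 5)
Your proposal is correct and follows essentially the same route as the paper, which derives the corollary directly from the proof of Lemma~\ref{lem:B-irred} by noting that $D^{}_{0}$ and $D^{}_{\tau}$ generate all of $\Mat(2,\CC)$; you have merely made explicit the ``routine'' computation the paper leaves to the reader, and your matrix identities ($D^{}_{0}D^{}_{\tau}=E^{}_{11}$, $D^{}_{\tau}D^{}_{0}=E^{}_{21}+E^{}_{22}$) check out.
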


Let us go back to Eq.~\eqref{eq:Fibo-FT-matrix}. Each entry in
the measure vector $\widehat{\bs{\nu}}$ has a unique decomposition
\[
   \widehat{\bs{\nu}^{}_{\! \alpha \beta}} \, = \,
   \bigl(\widehat{\bs{\nu}^{}_{\! \alpha \beta}}\bigr)_{\mathsf{pp}} + 
   \bigl(\widehat{\bs{\nu}^{}_{\! \alpha \beta}}\bigr)_{\mathsf{cont}}
\]
into its pure point (\textsf{pp}) and continuous (\textsf{cont}) part,
where the supporting sets $F_{\nts\alpha\beta}$ of the pure point parts
are (at most) countable sets, while the continuous parts are
concentrated on their complements. Defining $F = \bigcup_{\alpha\beta}
F_{\nts\alpha\beta}$, we see that $F$ is still (at most) a countable set
and that we can now write
\[ 
    \bigl(\widehat{\bs{\nu}^{}_{\! \alpha \beta}}\bigr)_{\mathsf{pp}} 
    \, = \, \widehat{\bs{\nu}^{}_{\! \alpha \beta}}\big|_{F}
    \quad \text{and} \quad
    \bigl(\widehat{\bs{\nu}^{}_{\! \alpha \beta}}\bigr)_{\mathsf{cont}} 
    \, = \, \widehat{\bs{\nu}^{}_{\! \alpha \beta}}
       \big|_{F_{\phantom{I}}^{\mathsf{c}}} 
\]
with $F^{\mathsf{c}} := \RR \setminus F$, simultaneously for all
$\alpha,\beta$. In particular, we then have a clear meaning of the
decomposition $\widehat{\bs{\nu}} =
(\widehat{\bs{\nu}})^{}_{\mathsf{pp}} +
(\widehat{\bs{\nu}})^{}_{\mathsf{cont}}$.

\begin{prop}
  Let\/ $\widehat{\bs{\nu}}$ be a solution of
  Eq.~\eqref{eq:Fibo-FT-matrix}.  Then, the pure point part\/
  $\bigl(\widehat{\bs{\nu}}\bigr)_{\mathsf{pp}}$ and the continuous
  part\/ $\bigl(\widehat{\bs{\nu}}\bigr)_{\mathsf{cont}}$ satisfy
  Eq.~\eqref{eq:Fibo-FT-matrix} separately.
\end{prop}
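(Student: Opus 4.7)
The plan is to check that the linear operator appearing on the right-hand side of Eq.~\eqref{eq:Fibo-FT-matrix} preserves the pure point versus continuous splitting of measures, and then to invoke uniqueness of this splitting to decouple the equation component by component.

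First I would verify that the pullback operation $\mu \mapsto f^{-1}\!.\ts\mu$ respects the pp/cont dichotomy. Since $f(x) = \tau x$ is a homeomorphism of $\RR$, the identity $f^{-1}\!.\ts\delta^{}_{x} = \delta^{}_{x/\tau}$ shows that pure point measures are sent to pure point measures. Dually, a measure is continuous precisely when it vanishes on every singleton, and since $(f^{-1}\!.\ts\mu)(\{x\}) = \mu(\{\tau x\})$, this property is evidently preserved, so continuous measures are sent to continuous ones.

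Next I would note that every entry of the matrix function $\bs{A}(k)$ is a bounded continuous function of $k$, namely $0$, $1$, or a unimodular phase factor $\ee^{\pm 2\pi\ii\tau k}$. Multiplication of a measure $\mu$ on $\RR$ by such a function $h$ preserves the decomposition $\mu = \mu^{}_{\mathsf{pp}} + \mu^{}_{\mathsf{cont}}$, because the atoms of $h\mu$ are precisely those of $\mu$ (with weights rescaled by $h$), while the continuous part is sent to a continuous measure. Applied componentwise, this shows that the map $\mu \mapsto \tau^{-2} \bs{A}(.) (f^{-1}\!.\ts\mu)$ sends any pure point vector of measures to a pure point vector, and any continuous one to a continuous one.

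With these two observations in place, I would conclude as follows. Using the common set $F = \bigcup_{\alpha\beta} F_{\nts\alpha\beta}$ introduced in the paragraph preceding the statement, the componentwise splitting $\widehat{\bs{\nu}} = \bigl(\widehat{\bs{\nu}}\bigr)_{\mathsf{pp}} + \bigl(\widehat{\bs{\nu}}\bigr)_{\mathsf{cont}}$ is well-defined, and the previous steps show that the right-hand side of Eq.~\eqref{eq:Fibo-FT-matrix} splits additively into the contributions coming from the pp and cont parts of the input. By uniqueness of the pp/cont decomposition of each component on the left, matching types on both sides yields the two asserted identities simultaneously. There is no serious obstacle here beyond bookkeeping; the essential structural input is the continuity of the entries of $\bs{A}(k)$, which prevents the multiplicative action from either creating new atoms out of a continuous piece or washing out existing ones.
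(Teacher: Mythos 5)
Your argument is correct and follows essentially the same route as the paper's proof: both rest on the observations that the pullback by the homeomorphism $f$ and multiplication by the continuous entries of $\bs{A}(k)$ each preserve the pure point and continuous classes, and then invoke the uniqueness (mutual orthogonality) of the Lebesgue decomposition to match types on both sides of Eq.~\eqref{eq:Fibo-FT-matrix}. The only cosmetic difference is that the paper phrases the final step via the $f$-invariant supporting set $F$ and restriction of Borel sets, whereas you appeal directly to uniqueness of the componentwise decomposition; these are equivalent here.
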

\begin{proof}
  The pure point and continuous parts are mutually orthogonal in the
  measure-theoretic sense; compare \cite[Prop.~8.4]{TAO}.  Since
  $A(k)$ is analytic in $k$ and $f^{-1}$ just induces a rescaling, it
  is clear that $\bigl( A(.) (f^{-1}\! . \,
  \widehat{\bs{\nu}})\bigr)_{\mathsf{pp}} = A(.) \bigl( f^{-1}\!  . \,
  (\widehat{\bs{\nu}})^{}_{\mathsf{pp}} \bigr) $, and analogously for
  the continuous parts. With the supporting set $F$ from above, 
  which we may assume to be invariant under the function $f$
  without loss of generality,  the
  evaluation of $\widehat{\bs{\nu}}$ on any Borel set $B$ can thus be
  split into two terms via $B = (B\cap F) \ts\dot{\cup} \ts(B \cap
  F^{\mathsf{c}})$, from which the claim follows by standard arguments
  because the pure point and the continuous components cannot mix.
\end{proof}

Now, the pure point and continuous parts are transformable
measures \cite{GLA}, and their (inverse) Fourier transforms
provide the unique decomposition 
\begin{equation}\label{eq:Fibo-split}
   \bs{\nu} \, = \, (\bs{\nu})^{}_{\mathsf{s}} + (\bs{\nu})^{}_{0}
\end{equation}
of $\bs{\nu}$ into its strongly almost periodic (\textsf{s}) and null
weakly almost periodic ($0$) parts, to be read componentwise as
before. Since Fourier transform is invertible on transformable
measures, these parts must then separately satisfy the renormalisation
relations of Eq.~\eqref{eq:Fibo-meas}.  Note that the parts still also
satisfy the symmetry relation, but it is not clear what the supporting
sets of the parts are. This is caused by $\vL-\vL$ \emph{not} being a
group, wherefore we initially get such a decomposition only within
$\RR$. To improve the situation, we need a smaller covering object of
$\vL - \vL$ with good harmonic properties to continue. \medskip

At this point, we do \emph{not} refer to the (known) model set
description recalled earlier, but rather proceed by employing
Strungaru's result from Section~\ref{sec:prelim}. This tells us that
both parts, $(\bs{\nu})^{}_{\mathsf{s}}$ and $(\bs{\nu})^{}_{0}$,
still are pure point measures with support in $\oplam\bigl(
[-\tau,\tau]\bigr)$. This has the following strong consequence, which
crucially builds on Proposition~\ref{prop:Fibo-rec} in a non-trivial
way.

\begin{theorem}
  Let\/ $\bs{\nu}$ be the unique solution of Eq.~\eqref{eq:Fibo-meas}
  according to Proposition~$\ref{prop:Fibo-rec}$, with initial
  condition\/ $\bs{\nu}^{}_{\! aa} \bigl( \{ 0 \} \bigr) =
  \frac{1}{\tau}$, say.  If\/ $\bs{\nu} = (\bs{\nu})^{}_{\mathsf{s}} +
  (\bs{\nu})^{}_{0}$ is the decomposition from
  Eq.~\eqref{eq:Fibo-split}, one has\/ $ (\bs{\nu})^{}_{0} =0$, which
  means that all measures\/ $\widehat{\bs{\nu}^{}_{\alpha\beta}}$ are
  pure point measures.
\end{theorem}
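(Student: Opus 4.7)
The plan is to leverage the one-dimensional solution space from Proposition~\ref{prop:Fibo-rec} to force the Eberlein decomposition of $\bs{\nu}$ to collapse. Two preparatory facts are crucial. First, as noted immediately before the statement, both the strongly almost periodic part $(\bs{\nu})^{}_{\mathsf{s}}$ and the null weakly almost periodic part $(\bs{\nu})^{}_{0}$ separately satisfy the renormalisation relations~\eqref{eq:Fibo-meas} together with the symmetry $\widetilde{\bs{\nu}^{}_{\alpha\beta}} = \bs{\nu}^{}_{\beta\alpha}$. Second, by Strungaru's theorem from Section~\ref{sec:prelim}, both parts are pure point measures with support inside the uniformly discrete set $\oplam\bigl([-\tau,\tau]\bigr)$.

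With this in hand, I would invoke Proposition~\ref{prop:Fibo-rec} (in its measure-valued form) for each part, placing them in the same one-parameter solution family as $\bs{\nu}$. Thus there must exist scalars $\lambda^{}_{\mathsf{s}}, \lambda^{}_{0}$ with $\lambda^{}_{\mathsf{s}} + \lambda^{}_{0} = 1$ such that $(\bs{\nu})^{}_{\mathsf{s}} = \lambda^{}_{\mathsf{s}} \bs{\nu}$ and $(\bs{\nu})^{}_{0} = \lambda^{}_{0} \bs{\nu}$. Taking Fourier transforms, $\widehat{(\bs{\nu})^{}_{\mathsf{s}}} = \lambda^{}_{\mathsf{s}} \widehat{\bs{\nu}}$ must be pure point while $\widehat{(\bs{\nu})^{}_{0}} = \lambda^{}_{0} \widehat{\bs{\nu}}$ must be continuous. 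If both scalars were nonzero, $\widehat{\bs{\nu}}$ would be simultaneously pure point and continuous, forcing $\widehat{\bs{\nu}} = 0$ and hence $\bs{\nu} = 0$, which contradicts the initial condition $\bs{\nu}^{}_{aa}(\{0\}) = 1/\tau$.

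It remains to rule out $\lambda^{}_{\mathsf{s}} = 0$. Were this the case, $\bs{\nu}$ itself would be null weakly almost periodic, hence every Fourier--Bohr coefficient would vanish; but $\bs{\nu}^{}_{aa} = \widetilde{\delta^{}_{\vL_{a}}} \circledast \delta^{}_{\vL_{a}}/\dens(\vL)$ has Fourier--Bohr coefficient $(\dens \vL^{}_{a})^{2}/\dens(\vL) > 0$ at $k=0$, since $\dens \vL^{}_{a} = \nu^{}_{aa}(0)\ts\dens(\vL) = 1/\sqrt{5} > 0$ follows from the initial condition together with the tile-length bookkeeping $\dens(\vL) = \tau/\sqrt{5}$. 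This forces $\lambda^{}_{0} = 0$ and gives $(\bs{\nu})^{}_{0} = 0$, as claimed. I expect the main obstacle to be the setup for the application of Proposition~\ref{prop:Fibo-rec} to each Eberlein component, as this is exactly the step where Strungaru's support bound becomes indispensable: without it, one could not conclude that $(\bs{\nu})^{}_{0}$ lies in a one-dimensional solution family, and the whole collapse argument would break down.
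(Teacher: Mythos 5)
Your argument coincides with the paper's own proof: both place the two Eberlein components, via Strungaru's support bound $\oplam\bigl([-\tau,\tau]\bigr)$, into the one-dimensional solution family of Proposition~\ref{prop:Fibo-rec}, conclude that one component must vanish because a nonzero measure cannot be simultaneously strongly and null-weakly almost periodic, and then rule out $(\bs{\nu})^{}_{\mathsf{s}}=0$ by the strictly positive Fourier--Bohr coefficient of $\bs{\nu}^{}_{aa}$ at $k=0$. The proposal is correct and essentially identical in structure to the published proof.
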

\begin{proof}
  From Proposition~\ref{prop:Fibo-rec}, we know that the solution of
  Eq.~\eqref{eq:Fibo-meas} is unique under the constraints formulated
  in this proposition. These contraints are met by both parts from
  Eq.~\eqref{eq:Fibo-split} separately, which are both supported on
  $\oplam\bigl( [-\tau,\tau]\bigr)$ in the CPS of Eq.~\eqref{eq:candp}
  by Strungaru's theorem \cite{Nicu}. However, we do not know how the
  initial condition is split between the two parts. However, the
  uniqueness of the solution means that the two parts are either
  proportional to one another (which is controlled by the initial
  condition at $0$), or one part is trivial.  We are clearly in the
  latter case here, because a non-zero measure cannot be strongly
  almost periodic and null-weakly almost periodic at the same time;
  compare \cite{GLA,Nicu}.
  
  Since we know that the Fourier transform of $\bs{\nu}^{}_{\! aa}$
  must have a Dirac measure with positive weight at $0$ (in fact, we
  even know that the pure point part has a relatively dense support by
  a result due to Strungaru \cite{Nicu-old}), we have
  $(\bs{\nu})^{}_{\mathsf{s}} \ne 0$, hence $(\bs{\nu})^{}_{0} =0$ and
  $\widehat{\bs{\nu}}$ is a vector of pure point measures as claimed.
\end{proof}

This provides a (partly) independent proof of the pure point nature for
the diffraction measure of the Fibonacci inflation rule. It is only
partly independent in the sense that it needs Strungaru's theorem for
Eq.~\eqref{eq:Fibo-split} as input, and this theorem still relies, to
some extent, on the CPS in the background. Our approach may be viewed
as some explicit way to prove that the Fibonacci dynamical system is
an a.e.\ 1-1 cover of its maximal equicontinuous (or Kronecker)
factor.  Note that this approach only provides the nature of the
diffraction measure, but not its explicit form, which was described
earlier via the projection formalism.

\section{Example 2: Thue--Morse and 
Rudin--Shapiro}\label{sec:TM-RS}

As another example, let us take a look at the classic sequences of
Thue--Morse and Rudin--Shapiro. Since both are examples of constant
length substitutions, the symbolic and the geometric pictures
coincide.  This results in a significant simplification in the sense
that one can derive recursion relations directly for the
autocorrelation coefficients, as explained in detail in
\cite[Chs.~10.1 and 10.2]{TAO}. Nevertheless, it is instructive to
also take a look at the proper analogues of Eq.~\eqref{eq:Fibo-rec},
which is actually in line with the general treatment in \cite{Q,Bart}.
The common feature is an additional symmetry that shows up as an
involution on the alphabet, which consists of an \emph{even} number of
letters. We call it a \emph{bar swap symmetry} and use an adjusted
alphabet to highlight its action. For other examples with a bar swap
symmetry, we refer to \cite{GTM,squiral} and references therein.

\subsection{Thue--Morse}

In the light of our general comment, we use the alphabet $\cA=\{ a,
\bar{a} \}$ and the substitution $a \mapsto a \bar{a}$, $\bar{a}
\mapsto \bar{a} a$.  The alphabet as well as the substitution rule is
invariant under the \emph{bar swap involution} $a \longleftrightarrow
\bar{a}$, which we call $P$. Let us now assume that the letters
represent intervals of length $1$, so that the coincidence of the
symbolic and the geometric picture is compatible with the embedding of
$\ZZ$ in $\RR$. Note that is suffices to study the $\ZZ$-action in
this case, as the $\RR$-action emerges from a standard suspension
\cite{EW}.

In this setting, and for each $\alpha,\beta \in \cA$, the correlation
coefficient $\nu^{}_{\!\alpha\beta}$ has support inside $\ZZ$, and the 
analogue of  Eq.~\eqref{eq:Fibo-rec} can compactly be written as
\begin{equation}\label{eq:TM-rec}
  2\, \nu^{}_{\!\alpha\beta} (z) \, = \,
  \nu^{}_{\!\alpha\beta} \bigl( \tfrac{z}{2}\bigr) +
  \nu_{\!\alpha\bar{\beta}} \bigl( \tfrac{z-1}{2}\bigr) +
  \nu^{}_{\!\bar{\alpha}\beta} \bigl( \tfrac{z+1}{2}\bigr) +
  \nu_{\!\bar{\alpha}\bar{\beta}} \bigl( \tfrac{z}{2}\bigr) \ts ,
\end{equation}
with the understanding that a coefficient always vanishes when the
argument is not an integer. It is clear that this set of four
equations is invariant under a bar swap. More precisely, applying $P$
to all first indices just permutes the four equations, and the same 
is true when $P$ is applied to all second indices, or to all indices. 

Before we analyse Eq.~\eqref{eq:TM-rec}, let us consider the
autocorrelation coefficients of the TM system. Via standard arguments,
compare \cite[Rem.~10.3]{TAO}, one needs two sets for the
decomposition of a general TM chain (resp.\ its autocorrelation) with
weights $h_{a}$ and $h_{b}$. We thus define
\begin{equation}\label{eq:TM-combi}
    \eta^{}_{\pm} (z) \, := \, 
    \bigl(\nu^{}_{\nts\nts a a} (z) +
      \nu^{}_{\nts \bar{a} \bar{a}} (z) \bigr)
   \pm \bigl(\nu^{}_{\nts\nts a \bar{a}} (z) +
      \nu^{}_{\nts \bar{a} a} (z) \bigr)
\end{equation}
and observe that the relations \eqref{eq:TM-rec} then lead to the
decoupled relations
\[
     \eta^{}_{\pm} (z) \, = \, 
       \eta^{}_{\pm} \bigl(\tfrac{z}{2}\bigr) \pm
      \tfrac{1}{2} \Bigl( \eta^{}_{\pm} \bigl(\tfrac{z-1}{2}\bigr) +
       \eta^{}_{\pm} \bigl(\tfrac{z+1}{2}\bigr) \Bigr),
\]
which is a consequence of the bar swap symmetry. 
Since $\eta^{}_{\pm}$ are functions on $\ZZ$, we now distinguish
even and odd $z$. This allows to rewrite the last recursion as
\begin{equation}\label{eq:TM-split}
\begin{split}
     \eta^{}_{\pm} (2n) & = \, \eta^{}_{\pm} (n) \ts , \\[1mm]
    \eta^{}_{\pm} (2n+1) & = \, \pm \tfrac{1}{2}
    \bigl(\eta^{}_{\pm} (n) + \eta^{}_{\pm} (n+1)\bigr),
\end{split}
\end{equation}
with $n\in\ZZ$.

It is easy to check inductively that Eq.~\eqref{eq:TM-split} implies
$\eta^{}_{+} (n) = \eta^{}_{+} (0)$ for all $n\in\ZZ$, so that
$\sum_{n\in\ZZ} \eta^{}_{+} (n) \, \delta^{}_{n} = \eta^{}_{+} (0) \,
\delta^{}_{\ZZ}$. Moreover, one finds $\eta^{}_{-} (\pm 1) = -
\frac{1}{3} \eta^{}_{-} (0)$, so that the coefficients $\eta^{}_{-}
(n)$ are the autocorrelation coefficients of the balanced Thue--Morse
chain as described in \cite[Sec.~10.1]{TAO}. Alternatively, they can
be viewed as the Fourier coefficients of the standard choice of the
maximal spectral measure in the orthocomplement of the pure point
sector, in line with the original treatment in \cite{Kaku}; see also
\cite{Q}.
\smallskip

Let us return to the original relations in Eq.~\eqref{eq:TM-rec}.
Here, the interesting situation arises that the solution space
\emph{does} depend on the support. This is an important difference to
the Fibonacci example in Section~\ref{sec:Fibo}. For a precise
formulation, we select a fixed point of the inflation, with seed $a|a$
say, and define the decomposition $\ZZ = \varLambda^{}_{a} \dot{\cup}
\varLambda^{}_{\bar{a}}$ with $\varLambda^{}_{\alpha}$ denoting all
integers that are the left endpoint of an interval of type $\alpha$.
Then, analogously to before, $S_{\!\alpha \beta} =
\varLambda_{\beta} - \varLambda_{\alpha}$ is the support of the
relative pair correlation coefficient $\nu^{}_{\!\alpha \beta}$ of the
TM system. Note that this support is the same for the entire hull,
even though we defined it via a selected fixed point, as a consequence
of strict ergodicity of the dynamical system.

\begin{prop}\label{prop:TM-dim}
  The renormalisation relations \eqref{eq:TM-rec}, viewed as relations
  between functions\/ $\nu^{}_{\!\alpha\beta} \! : \, \ZZ
  \longrightarrow \RR$ with\/ $\alpha,\beta \in\cA$, possesses a
  two-dimensional solution space.

  Moreover, under the additional restriction that\/ $\supp
  (\nu^{}_{\!\alpha\beta}) = S^{}_{\!\alpha \beta}$ for all\/
  $\alpha,\beta \in \cA$, the solution space is only one-dimensional.
\end{prop}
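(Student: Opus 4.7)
My plan is to decouple the four-component system \eqref{eq:TM-rec} by exploiting the bar swap involution $P$, count dimensions in each resulting sector, and then add the support constraint. To this end, I would form the linear combinations $s = \nu^{}_{\!aa} + \nu^{}_{\!\bar a\bar a}$ and $t = \nu^{}_{\!a\bar a} + \nu^{}_{\!\bar a a}$ (both $P$-invariant), together with $u = \nu^{}_{\!aa} - \nu^{}_{\!\bar a\bar a}$ and $v = \nu^{}_{\!a\bar a} - \nu^{}_{\!\bar a a}$ (both $P$-anti-invariant). Adding and subtracting the four instances of \eqref{eq:TM-rec} diagonalises the bar-swap action: the $s,t$ sector further decouples into the relations for $\eta_\pm = s \pm t$ already displayed in \eqref{eq:TM-split}, while the $u,v$ sector satisfies the coupled scheme $2u(z) = v\bigl(\tfrac{z-1}{2}\bigr) - v\bigl(\tfrac{z+1}{2}\bigr)$ and $2v(z) = u\bigl(\tfrac{z-1}{2}\bigr) - u\bigl(\tfrac{z+1}{2}\bigr)$, with the usual convention that both functions vanish at non-integer arguments.

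For $\eta_+$, the odd case of \eqref{eq:TM-split} at $n=0$ is the self-referential identity $\eta_+(1) = \tfrac{1}{2}(\eta_+(0) + \eta_+(1))$, which forces $\eta_+(1) = \eta_+(0)$. A strong induction on $|n|$, using both lines of \eqref{eq:TM-split}, then extends this to $\eta_+(n) = \eta_+(0)$ for every $n \in \ZZ$. For $\eta_-$, the analogous step gives $\eta_-(1) = -\tfrac{1}{3}\eta_-(0)$, and the same inductive scheme determines $\eta_-$ uniquely on $\ZZ$ from $\eta_-(0)$. These two sectors thus each contribute one real dimension.

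The main obstacle, in my view, is to rule out a non-trivial solution in the $(u,v)$ sector, whose recursion is coupled rather than diagonal. Evaluating \eqref{eq:TM-rec} at $z=0$ yields $\nu^{}_{\!aa}(0) = \nu^{}_{\!\bar a\bar a}(0)$ and $\nu^{}_{\!a\bar a}(0) = \nu^{}_{\!\bar a a}(0)$, hence $u(0) = v(0) = 0$. For every even $z$ the coupled scheme directly forces $u(z) = v(z) = 0$. At $z = \pm 1$ it reduces to a self-referential $2 \times 2$ linear system whose only solution is $u(\pm 1) = v(\pm 1) = 0$. For odd $z$ with $|z| \geq 3$, both arguments $\tfrac{z\pm 1}{2}$ have strictly smaller absolute value than $|z|$, so a strong induction on $|z|$ yields $u \equiv v \equiv 0$. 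Combining the three sectors, the unrestricted solution space is $2$-dimensional, parametrised by $\eta_+(0)$ and $\eta_-(0)$.

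For the second assertion, the support condition $\supp(\nu^{}_{\!\alpha\beta}) = S^{}_{\!\alpha\beta}$ forces $\nu^{}_{\!a\bar a}(0) = 0 = \nu^{}_{\!\bar a a}(0)$, because $0 \notin S^{}_{\!a\bar a} \cup S^{}_{\!\bar a a}$ (two points of distinct types cannot coincide). Since $u \equiv v \equiv 0$ gives $\nu^{}_{\!a\bar a} = \tfrac{1}{4}(\eta_+ - \eta_-)$, this is equivalent to the single linear relation $\eta_+(0) = \eta_-(0)$, which cuts the solution space from two dimensions down to one. The actual Thue--Morse pair-correlation coefficients realise this one-parameter family, and strict ergodicity of the hull guarantees that the full support $S^{}_{\!\alpha\beta}$ is indeed attained in each component.
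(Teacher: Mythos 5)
Your proof is correct and follows essentially the same route as the paper: the finitely many self-consistency equations at $\lvert z\rvert \le 1$ together with the purely recursive determination of all coefficients with $\lvert z\rvert \ge 2$, organised via the bar-swap eigendecomposition into the invariant combinations $\eta_{\pm}$ of Eqs.~\eqref{eq:TM-combi}--\eqref{eq:TM-split} and the anti-invariant $(u,v)$ sector. You have simply carried out explicitly the ``straight-forward calculation'' that the paper leaves to the reader, including the verification that the mixed sector is forced to vanish.
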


\begin{proof}
  Here, the finitely many self-consistency equations are those with
  arguments in $[-1,1]$. As before, they fully determine the dimension
  of the solution space, because all other equations are of a purely
  recursive nature and determine the remaining coefficients uniquely.
  Our claim is now a straight-forward calculation, which can be left
  to the reader.
\end{proof}

Let us explore the difference to the Fibonacci
inflation in more algebraic terms. Here, the Fourier matrix $B$ reads
\[
    B(k) \, = \, \begin{pmatrix}
    1 & \ee^{2 \pi \ii k} \\ \ee^{2 \pi \ii k} & 1 \end{pmatrix}
    \, = \, \one + \ee^{2 \pi \ii k} J
\]
with $J= \left(\begin{smallmatrix} 0 & 1 \\ 1 &
    0 \end{smallmatrix}\right)$.  This implies that the IDA is
generated by the commuting digit matrices
\[
     D_{0} \, = \, \one
     \quad \text{and} \quad
     D_{1} \, = \, J .
\]
As a consequence, the IDA is \emph{reducible} and certainly not the
full matrix algebra $\mathrm{Mat} (2,\CC)$. Indeed, the eigenspaces of
$J$, namely $\CC \left(\begin{smallmatrix} 1 \\ 1 \end{smallmatrix}
\right)$ and $\CC \left(\begin{smallmatrix} 1 \\ -1 \end{smallmatrix}
\right)$, are non-trivial invariant subspaces of the IDA.
Consequently, also the Kronecker product
\[
  \bs{A}(k) \, = B(k) \otimes \overline{B(k)} \, = \,
  \begin{pmatrix} 
    1 & \ee^{-2\pi\ii k} & \ee^{2\pi\ii k} & 1 \\
    \ee^{-2\pi\ii k} & 1 & 1 & \ee^{2\pi\ii k} \\
    \ee^{2\pi\ii k} & 1 & 1 & \ee^{-2\pi\ii k} \\
    1 & \ee^{2\pi\ii k} & \ee^{-2\pi\ii k} & 1 \\
  \end{pmatrix}  
\]
has a $k$-independent eigenbasis. The two eigenvectors $(1,1,1,1)^T$
and $(1,-1,-1,1)^T$ correspond to the two solutions $\eta^{}_\pm$ from
Eq.~\eqref{eq:TM-combi}. In fact, each of these eigenvectors is the
Kronecker product of one of the eigenvectors of $B(k)$ with itself.
The (symmetrised) mixed Kronecker products, one eigenvector times the
other, do not play a role here. Even though these vectors span an
invariant subspace, too, the off-diagonal sectors of the tensor
product alone cannot lead to a positive measure. This is possible only
in combination with the diagonal parts, which span invariant subspaces
already by themselves \cite{Q,Bart}.

\subsection{Rudin--Shapiro}

For this example, we use the alphabet $\cA = \{ a,b, \bar{a},
\bar{b}\}$ together with the substitution rule
\[
    \varrho^{}_{\mathrm{RS}} : \quad
    a \mapsto ab \, , \quad 
    b \mapsto a\bar{b} \, , \quad
   \bar{a} \mapsto \bar{a}\bar{b} \, , \quad 
   \bar{b} \mapsto \bar{a} b \ts .
\]
This is equivalent to the formulation used in \cite[Sec.~4.7.1]{TAO}
via the identifications $a\, \widehat{=}\, 0$, $\bar{a}\,\widehat{=}\,
3$, $b \, \widehat{=}\, 2$ and $\bar{b}\,\widehat{=}\, 1$. As before,
both the alphabet and the substitution rule are invariant under the
complete bar swap $a \longleftrightarrow \bar{a}$,
$b\longleftrightarrow \bar{b}$, again called $P$. So, we have
\[
    P (\cA) \, = \, \cA  \quad \text{and} \quad
    \varrho^{}_{\mathrm{RS}} \circ P \, = \,
     P \circ \varrho^{}_{\mathrm{RS}} \ts ,
\]
which has similar consequences as in the previous example. In fact,
we have a little more in this example: One can also check that
\[
    R \circ \varrho^{}_{\mathrm{RS}}  \, = \,
    E \circ \varrho^{}_{\mathrm{RS}} \circ E \ts ,
\]
where $R$ is the permutation $a\mapsto b \mapsto \bar{a} \mapsto
\bar{b}\mapsto a$ of order $4$, with $R^2=P$, and $E$ is the letter
exchange involution defined by $a \longleftrightarrow b$, $ \bar{a}
\longleftrightarrow \bar{b}$. Since $R\circ E$ is another involution,
the group generated by $R$ and $E$ is the dihedral group of order $8$.

The Fourier matrix for $\varrho^{}_{\mathrm{RS}}$  reads
\[
    B(k)  \, = \, D_{0} + \ee^{2 \pi \ii k} D_{1} \ts ,
\]
with the (non-commuting) generating digit matrices
\[
     D_{0} \, = \, \begin{pmatrix} 
     1 & 1 & 0 & 0 \\ 0 & 0 & 0 & 0\\
     0 & 0 & 1 & 1 \\ 0 & 0 & 0 & 0 \end{pmatrix}
     \quad \text{and} \quad
     D_{1} \, = \, \begin{pmatrix}
     0 & 0 & 0 & 0 \\ 1 & 0 & 0 & 1 \\
     0 & 0 & 0 & 0 \\ 0 & 1 & 1 & 0 \end{pmatrix} .
\]
As one can easily check, $\mathrm{ker} (D_{0}) \cap \mathrm{ker}
(D_{1}) = \CC \ts (1,-1,1,-1)^{t}$, which is a non-trivial invariant
subspace. The IDA generated by $D_{0}$ and $D_{1}$ is thus reducible,
but the two digit matrices cannot be diagonalised
simultaneously. However, one has the block diagonal form
\[
   T_{P} D_{0} T_{P} \, = \,  \begin{pmatrix}
     1 & 1 & 0 & 0 \\ 0 & 0 & 0 & 0\\
     0 & 0 & 1 & 1 \\ 0 & 0 & 0 & 0 \end{pmatrix}
     \quad \text{and} \quad
    T_{P} D_{1} T_{P} \, = \, \begin{pmatrix}
     0 & 0 & 0 & 0 \\ 1 & 1 & 0 & 0 \\
     0 & 0 & 0 & 0 \\ 0 & 0 & 1 & -1 \end{pmatrix} ,
\]
where $T_{P} = \frac{1}{\sqrt{2}}\, \left( \begin{smallmatrix}
1 & 1 \\ 1 & -1 \end{smallmatrix}\right) \otimes \one^{}_{2}$
is an involution that is induced by the bar swap map $P$.
\smallskip

The renormalisation relations for the pair correlation coefficients
read
\begin{equation}\label{eq:RS-rec}
\begin{split}
  2\, \nu^{}_{\nts aa} (z) \, & = \,
    \nu^{}_{\nts aa} \bigl( \tfrac{z}{2}\bigr) +
    \nu^{}_{\nts ab} \bigl( \tfrac{z}{2}\bigr) +
    \nu^{}_{ba} \bigl( \tfrac{z}{2}\bigr) +
    \nu^{}_{bb} \bigl( \tfrac{z}{2}\bigr) \ts , \\
  2\,\ts \nu^{}_{\nts ab} (z) \, & = \,
   \nu^{}_{\nts aa} \bigl( \tfrac{z-1}{2}\bigr) +
    \nu^{}_{\nts a\bar{b}} \bigl( \tfrac{z-1}{2}\bigr) +
    \nu^{}_{ba} \bigl( \tfrac{z-1}{2}\bigr) +
    \nu^{}_{b\bar{b}} \bigl( \tfrac{z-1}{2}\bigr) \ts , \\
  2\, \nu^{}_{ba} (z) \, & = \,
    \nu^{}_{\nts aa} \bigl( \tfrac{z+1}{2}\bigr) +
    \nu^{}_{\nts ab} \bigl( \tfrac{z+1}{2}\bigr) +
    \nu^{}_{\bar{b}a} \bigl( \tfrac{z+1}{2}\bigr) +
    \nu^{}_{\bar{b}b} \bigl( \tfrac{z+1}{2}\bigr) \ts , \\
  2\,\ts \nu^{}_{bb} (z) \, & = \,
    \nu^{}_{\nts aa} \bigl( \tfrac{z}{2}\bigr) +
    \nu^{}_{\nts a\bar{b}} \bigl( \tfrac{z}{2}\bigr) +
    \nu^{}_{\bar{b}a} \bigl( \tfrac{z}{2}\bigr) +
    \nu^{}_{\bar{b}\bar{b}} \bigl( \tfrac{z}{2}\bigr) \ts .
\end{split}
\end{equation}
Here, each line actually represents four equations, the other three
being obtained by simultaneously applying $P$ to all first, to all
second, or to all indices at once. Let $S^{}_{\!\alpha \beta}$ again
denote the supports of the coefficient functions, defined in complete
analogy to above. Then, with essentially the same arguments as in
Proposition~\ref{prop:TM-dim}, one finds the following result.

\begin{prop}\label{prop:RS-dim}
  The\/ $16$ renormalisation relations specified by
  Eq.~\eqref{eq:RS-rec}, viewed as relations between functions\/
  $\nu^{}_{\!\alpha\beta} \! : \, \ZZ \longrightarrow \RR$ with\/
  $\alpha,\beta \in\cA$, possesses a two-dimensional solution space.

  Moreover, under the additional restriction that\/ $\supp
  (\nu^{}_{\!\alpha\beta}) = S^{}_{\!\alpha \beta}$ for all\/
  $\alpha,\beta \in \cA$, the solution space is only one-dimensional.
  \qed
\end{prop}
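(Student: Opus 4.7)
The plan is to follow the proof of Proposition~\ref{prop:TM-dim} step for step. The two key inputs are a recursive/self-consistency split that reduces the problem to a finite linear system in finitely many unknowns, and a bar swap decomposition that block-diagonalises that system into two independent pieces.

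First I would note that, for every $\lvert z\rvert\ge 2$, each argument $z/2$ or $(z\pm 1)/2$ appearing on the right-hand side of \eqref{eq:RS-rec} (and of its three $P$-conjugate systems of four equations each) is, when integral at all, strictly smaller in absolute value than $z$. These equations therefore merely propagate values from smaller arguments and impose no constraint on the solution space. The entire dimension is thus governed by the self-consistency equations at $z\in\{-1,0,1\}$, which constitute a finite linear system in the $16\times 3 = 48$ values $\nu^{}_{\!\alpha\beta}(z)$, with $\alpha,\beta\in\cA$.

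Next, using $\varrho^{}_{\mathrm{RS}}\circ P = P\circ\varrho^{}_{\mathrm{RS}}$, I would pass to the combinations
\[
   \nu^{\pm}_{\!\alpha\beta} \, := \,
     \tfrac{1}{2}\bigl( \nu^{}_{\!\alpha\beta} \pm \nu^{}_{\bar\alpha\bar\beta}\bigr),
\]
which turns the $16$-component recursion into two independent $8$-component recursions, in exact analogy with the passage from \eqref{eq:TM-rec} to \eqref{eq:TM-split}. A direct kernel computation on each block should produce a one-dimensional solution space, yielding the claimed total dimension $2$. For the second assertion, I would impose the support restriction, whose decisive part is $\nu^{}_{\!\alpha\beta}(0)=0$ for $\alpha\ne\beta$, since $0\notin S^{}_{\!\alpha\beta}$ in that case. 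One of the two basis solutions found above, namely the $P$-symmetric, constant-background analogue of the $\eta^{}_{+}$-sector of Thue--Morse, should have non-vanishing off-diagonal values at $z=0$, while the other is the genuine Rudin--Shapiro correlation and satisfies the support condition. This single restriction then cuts the dimension from $2$ down to $1$.

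The main obstacle is purely combinatorial: writing out the $48$ self-consistency relations carefully, keeping track of which $P$-conjugate of \eqref{eq:RS-rec} each one originates from, and extracting the kernels of the two resulting $8\times 8$ blocks. There is no conceptual novelty beyond what is already in Proposition~\ref{prop:TM-dim}; the richer algebraic structure (four letters, non-commuting digit matrices, reducible but non-diagonalisable IDA) simply makes the bookkeeping longer, which is presumably why the authors invoke \emph{essentially the same arguments} rather than rewriting the proof.
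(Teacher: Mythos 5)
Your overall strategy is exactly the paper's: the paper proves this by reference to Proposition~\ref{prop:TM-dim}, whose proof consists of the recursive/self-consistency split (all equations with $\lvert z\rvert\ge 2$ only propagate values from strictly smaller arguments, so the dimension is fixed by the closed finite system at $z\in\{-1,0,1\}$) followed by a finite linear-algebra computation left to the reader. Your reduction to the $48$ unknowns and your reading of the support restriction (its decisive content being $\nu^{}_{\!\alpha\beta}(0)=0$ whenever $0\notin S^{}_{\!\alpha\beta}$) are both in line with that.

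However, there is a concrete error in the symmetry step. The combinations $\nu^{\pm}_{\!\alpha\beta}=\frac12(\nu^{}_{\!\alpha\beta}\pm\nu^{}_{\bar\alpha\bar\beta})$ diagonalise the \emph{diagonal} bar swap $P\otimes P$, and this is \emph{not} the decomposition that produces $\eta^{}_{\pm}$ in Eq.~\eqref{eq:TM-combi}: both $\eta^{}_{+}$ and $\eta^{}_{-}$ are built from the $P\otimes P$-\emph{even} combinations $\nu^{}_{aa}+\nu^{}_{\bar a\bar a}$ and $\nu^{}_{a\bar a}+\nu^{}_{\bar a a}$, and are then separated by the sign of the swap on the second index alone. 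In the Thue--Morse case the $P\otimes P$-odd combinations $d^{}_{1}=\nu^{}_{aa}-\nu^{}_{\bar a\bar a}$ and $d^{}_{2}=\nu^{}_{a\bar a}-\nu^{}_{\bar a a}$ satisfy $2d^{}_{1}(z)=d^{}_{2}(\frac{z-1}{2})-d^{}_{2}(\frac{z+1}{2})$ and $2d^{}_{2}(z)=d^{}_{1}(\frac{z-1}{2})-d^{}_{1}(\frac{z+1}{2})$, which force $d^{}_{1}=d^{}_{2}=0$; the entire two-dimensional solution space lies in the even block. The same happens for Rudin--Shapiro: in the notation of the discussion following the proposition, the $P\otimes P$-even sector is $(V_+\otimes V_+)\oplus(V_-\otimes V_-)$, and the two independent solutions (the one giving the pure point component and the one giving the absolutely continuous component) both live there, while the off-diagonal sector $(V_+\otimes V_-)\oplus(V_-\otimes V_+)$ carries no solution at all. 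So your predicted kernel dimensions per block are $1+1$ where the correct count is $2+0$; the total of $2$ is right, but the computation you propose would not come out as you announce it, and the identification of the second basis solution with the ``$\eta_+$-analogue'' sector is tied to this misattribution. The fix is small --- decompose first under $P\otimes P$, discard the odd block, and only then split the even block --- but as written the plan contains a step that would fail.
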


The next step, as in the previous section, consists in the analysis
of the Kronecker product
\[
   \bs{A}(k) \, = \, B(k) \otimes \overline{B(k)} \, = \,
   D_{0} \otimes D_{0} + D_{1} \otimes D_{1} +
   \ee^{2 \pi \ii k} D_{1} \otimes D_{0} +
   \ee^{-2 \pi \ii k} D_{0} \otimes D_{1} 
\]
with respect to the splitting into even and odd sectors under $x
\otimes y \mapsto \overline{y\otimes x}$, viewed as \emph{real} vector
spaces, and with respect to further invariant subspaces, which exist
as a result of Proposition~\ref{prop:RS-dim}.  As we have seen above,
with the change of basis induced by $T_P$, $B(k)$ can be brought to
block-diagonal form, $B(k)=B_+(k)\oplus B_-(k)$, which acts on a 
direct sum $V_+\oplus V_-$, both of whose summands are separately 
left invariant. As in the Thue--Morse case, only the tensors in
$V_+\otimes V_+$ and $V_-\otimes V_-$ can lead to positive measures. 
The off-diagonal parts in the tensor product \emph{alone} cannot lead 
to a positive measure. As $B_+(k)$ has rank one, the corresponding 
Kronecker product $B_+(k)\otimes\overline{B_+(k)}$ also has rank one, 
and is symmetric. On the subspace $V_+\otimes V_+$, $A(k)$ acts
with an eigenvalue $2+2\cos(2\pi k)$, which results in the pure point
part of the spectrum of the Rudin--Shapiro sequence, just as in the
Thue--Morse case. On the subspace $V_-$, on the other hand, $B_-(k)$ 
generates the full matrix algebra, and its tensor product algebara, 
generated by $B_-(k)\otimes\overline{B_-(k)}$, is irreducible, too, 
if confined to the symmetric sector of $V_-\otimes V_-$. This 
irreducible invariant sector carries the other spectral component, 
which is known to be absolutely continuous; compare \cite{Q,Bart}.

\section{Example 3: Twisted silver mean}\label{sec:sm-mixed}

The Thue--Morse and Rudin--Shapiro sequences from the last section are
two examples of structures with a mixed spectrum. Many more, also
higher-dimensional ones, can be found in
\cite{NF1,NF2,TAO,GTM,BGG,squiral,Bart}.  Most of them have two
features in common, which they share with the Thue--Morse and
Rudin--Shapiro sequences: They are based on constant-length
substitutions, where the letters (which can be identified with the
prototiles here) come in pairs, such that prototiles within a pair are
geometrically equal, but still differ in their type. We denote this,
as before, by the presence or absence of a \emph{bar}.

To expand on this, let $\cA$ denote the set of prototiles. The key
feature then is that the map $P \! : \, \cA \longrightarrow \cA$ which
changes (or swaps) the bar status of all prototiles simultaneously, so
$\alpha \mapsto \bar{\alpha}$ for all $\alpha\in\cA$ with
$\bar{\bar{\alpha}} = \alpha$, commutes with the inflation rule, and
is thus a symmetry. Clearly, the map $P$ is an involution and has an
obvious extension to arbitrary finite patches of tiles, and then also
to the hull defined by a primitive inflation rule for $\cA$. By slight
abuse of notation, we always denote the bar swap by $P$.  The map $P$
is the key to constructing examples with mixed spectrum, and we shall
see in this section that it is \emph{not} confined to constant length
substitutions.

\subsection{General Setup}

Before constructing particular examples, let us analyse the
general situation. For ease of exposition, we assume a 
one-dimensional tiling, though the entire construction works in
higher dimensions as well. Let $\sigma$ be a primitive inflation rule on
a prototile set $\cA$ with bar swap symmetry, so that 
\[
    P(\cA) \, = \, \cA  \quad \text{and} \quad
     \sigma \circ P \, = \, P \circ \sigma
\]
hold for the map $P$ defined above. Let $\Omega$ be the hull that is
generated by $\sigma$; see \cite{TAO} for background.  We will now
construct a globally 2-1 factor map $\varphi$ from the dynamical
system $(\Omega,\RR)$ to a factor dynamical system $(\Omega',\RR)$,
which identifies tilings related by a bar swap, so $\varphi
(\omega) = \varphi \bigl( P (\omega))$ for all $\omega \in \Omega$.
By construction, the map $\varphi$ commutes with the translation
action, so that we may suppress the latter without danger of confusion.

To construct such a factor map explicitly, we first rewrite the
inflation $\sigma$ in terms of \emph{collared tiles}. The latter
consist of pairs of tiles $t_1t_2$, where $t_1$ is the actual tile
and $t_2$ is a (one-sided) collar of $t_1$. This is seen as a label
attached to $t_1$ that specifies the type of the right neigbour tile
of $t_1$.  Obviously, there is an induced inflation rule
$\widetilde\sigma$ on the collared tiles. If $\sigma(t_1)=u_1\dots
u_k$, and $\sigma(t_1t_2)=u_1\dots u_\ell$, the inflation of the
collared tile $t_1t_2$ consists of the sequence of collared tiles
$u_iu_{i+1}$, where $i$ runs from $1$ to $k$. The new inflation is
still primitive, and defines a unique hull, which we call
$\widetilde{\Omega}$.  Clearly, collaring is a local operation, which
has a local inverse (the forgetful map wiping out the collars), and it
commutes with the inflation. In other words, the collared and the
uncollared inflations, $\widetilde{\sigma}$ and $\sigma$, define
mutually locally derivable (MLD) hulls, which are thus topologically
conjugate; compare \cite[Sec.~5.2]{TAO}.

We now observe that also $\widetilde\sigma$ has a bar swap symmetry,
which simulaneously swaps the bar status of the tile and its
collar. We can now consider the factor map $\varphi$ that is induced
by identifying collared tiles related by a bar swap. Clearly,
$\varphi$ also identifies pairs of global tilings related by a bar
swap, so $\varphi (\widetilde{\omega}) = \varphi\bigl( P
(\widetilde{\omega})\bigr)$ for all $\widetilde{\omega}\in
\widetilde{\Omega}$. Since $\Omega \simeq \widetilde{\Omega}$, our
procedure induces a unique mapping from $\Omega$ to $\Omega' :=
\varphi(\widetilde{\Omega})$, which we simply call $\varphi$ again.

Let $\omega' \in \Omega'$ now be a tiling in the image of $\varphi$,
given by a bi-infinite sequence of tiles $t_i$, where $i\in\ZZ$. The
collared tile $t_0t_1$ has exactly two possible preimages --- let us
chose one of them. The neighbouring collared tile, $t_1t_2$, also has
two preimages, but as we have already chosen a preimage of the
collared tile $t_0t_1$, and thus of $t_1$, there is only one choice
left. Continuing like this, we see that, once we have chosen a preimage
of the collared tile $t_0t_1$, the lifts of all other tiles to the
right is fixed, and analogously to the left as well. Consequently,
$\omega'$ has precisely two preimages, and the mapping
$\varphi \! : \, \Omega \longrightarrow \Omega'$ is globally 2-1.

Wiping out all bars from the tiles of the original inflation
$\sigma$ also induces a factor map, but this one need not be globally
2-1. Let $\Omega''$ denote the image under this map and note that
the latter must also be a factor of $\Omega'$, so we actually
have a sequence of factor maps
\[
\begin{CD}
     \Omega  @>\varphi>\text{2-1}> \Omega' 
     @>\varphi'>\text{1-1 a.e.}> \Omega'',
\end{CD}
\]
where $\Omega'$ is obtained by identifying \emph{collared} tiles that
are related by a bar swap, while $\Omega''$ is obtained by identifying
\emph{original} tiles related by a bar swap. The second map $\varphi'$
is 1-1 a.e., because the composition of $\varphi$ and $\varphi'$ is
a.e.\ 2-1.  Almost all tilings in $\Omega''$ consist of a single, infinite 
order supertile, and these have exactly two preimages in $\Omega$,
which differ by a bar swap. Only tilings consisting of two adjacent
infinite order supertiles may have more than two preimages, but these
are of measure zero. $\Omega'$ and $\Omega''$ may coincide (in the
Thue--Morse case they to), but in general they are different. Note
that all systems under consideration here are strictly ergodic.

Each of the translation dynamical systems has a \emph{maximal
  equicontinuous factor} (MEF), also known as their Kronecker factor,
so that the above sequence of factor maps can be completed as follows,
\begin{equation}\label{eq:facmaps}
\begin{CD}
   \Omega  @>\varphi>\text{2-1}> \Omega' 
   @>\varphi'>\text{1-1 a.e.}> \Omega'' \\
   @VV{\xi}V @VV{\xi'}V @VV{\xi''}V \\
   \Omega^{}_\text{MEF}  @>\psi>> \Omega_\text{MEF}' 
         @= \Omega_\text{MEF}''
\end{CD}
\end{equation}
It is well known (compare \cite{Barge}) that a 
tiling dynamical system has pure point
dynamical spectrum if and only if the factor map to the underlying MEF
is 1-1 almost everywhere. Let us now assume that $\Omega'$, and thus
also $\Omega''$ by standard results \cite{BL-2}, has pure point
spectrum, so that both $\xi'$ and $\xi''$ are 1-1 a.e. As $\varphi'$
is 1-1 a.e., the MEFs $\Omega_\text{MEF}'$ and $\Omega_\text{MEF}''$
are equal.  There still remain two possibilities, however. The map
$\psi$ is a group homomorphism, and can thus be either 1-1 or 2-1. If
$\psi$ is 1-1, this implies that $\xi$ is 2-1 a.e., so that $\Omega$
must have mixed spectrum, whereas, if $\psi$ is 2-1, $\xi$ is 1-1
a.e., and $\Omega$ has pure point spectrum.

In order to compare $\Omega^{}_\text{MEF}$ and $\Omega_\text{MEF}'$,
we need to compare their respective modules of eventual return
vectors.  Recall that $r$ is a \emph{return vector} of a tiling
dynamical system $\Omega$ when there exist two tiles $t_1$ and $t_2$
of the same type in some tiling $\omega\in\Omega$ such that the
distance of their left endpoints is $r$.  The module of return
vectors, $\mathcal{R}_\Omega$, is the $\ZZ$-span of all return
vectors.  This is a finitely generated submodule of the $\ZZ$-module
generated by all tile lengths, $\mathcal{T}^{}_{\Omega}$. The (additive) pure
point spectrum of $\Omega$ now consists of all those $k\in \RR$ such
that, for any return vector $r$, one has $e^{2\pi \ii \lambda^nk
  r}\longrightarrow 1$ as $n\to\infty$ by \cite{Sol}. This pure 
point spectrum can only be non-trivial if the inflation factor 
(or multiplier) $\lambda$ is a PV number, which we assume.

The quantity of interest now is the $\ZZ$-module of \emph{eventual
return vectors}, given by 
\[
   \mathcal{M}^{}_{\Omega} \, = \,
   \big\langle\{x\in\mathcal{T}^{}_{\Omega} \mid \lambda^n x \in 
    \mathcal{R}^{}_\Omega, \text{ for some } n\in\NN\}
    \big\rangle_{\ZZ} \ts .
\]
Clearly, $\Omega$ and $\Omega'$ have the same MEF if and only if
$\mathcal{M}^{}_{\Omega} = \mathcal{M}^{}_{\Omega'}$. For constant
length substitutions, the module $\mathcal{M}^{}_{\Omega}$ is known as
the height lattice; compare \cite{NF2}. If $\mathcal{M}^{}_{\Omega}=h
\ts \ZZ$, with $|h|>1$, the substitution is said to have non-trivial
height $\lvert h \rvert$.  What matters in our context, however, is not whether any
height is non-trivial, but whether $\Omega$ and $\Omega'$ have the
same height.

\subsection{Extending the silver mean inflation}

Let us now look at concrete examples. Our goal is to construct almost
2-1 extensions of the silver mean inflation \cite[Ex.~4.5]{TAO}
\[
   \sigma^{}_{\nts\mathsf{sm}} : \quad a \mapsto aba \, , 
     \quad b \mapsto a \ts ,
\]
in such a way that we gain a bar swap symmetry. The scaling factor (or
inflation multiplier) of $\sigma^{}_{\nts\mathsf{sm}}$ is
$\lambda=1+\sqrt{2}$, and the natural tile lengths for $a$ (long) and
$b$ (short) are $\lambda$ and $1$, respectively. Here, $\lambda$ is a
Pisot unit, and it is well known that $\sigma$ generates tilings with
pure point spectrum; see \cite[Chs.~7 and 9]{TAO} for details.

We now add a barred version of each tile, thus giving the new
prototile set $\cA = \{ a, b, \bar{a}, \bar{b}\}$, and introduce an
inflation which is primitive and commutes with the bar swap
invlution $P$. Furthermore, this inflation shall reduce to $\sigma$
under the identification of $a$ with $\bar{a}$ and $b$ with
$\bar{b}$. A first attempt of such an inflation could be
\[
   \widetilde{\sigma} : \quad
   a \mapsto a \bar{b} a \, , \quad 
   b \mapsto a \, , \quad
   \bar{a} \mapsto \bar{a} b \bar{a} \, , \quad 
   \bar{b} \mapsto \bar{a} \ts .
\]
It is easy to see that, for any element of the hull $\Omega$ defined
by $\widetilde\sigma$, exactly every second tile carries a bar. As
multiplication by $\lambda$ induces a isomorhism on
$\mathcal{T}^{}_{\Omega}$, this means that the module of eventual
return vectors $\mathcal{M}^{}_{\Omega}$ is an index-2 submodule of
$\mathcal{T}^{}_{\Omega}$, whereas $\mathcal{M}^{}_{\Omega'} =
\mathcal{T}^{}_{\Omega}$. The multiplicity of the map $\psi$ in
diagram \eqref{eq:facmaps} is thus $2$, and the map $\xi$ is 1-1 a.e.\
in this case, so that $\Omega'$ still has pure point spectrum.

Our next attempt is the inflation $\bar\sigma$, given by
\[
    \bar\sigma : \quad
   a \rightarrow ab\bar{a} \, , \quad 
   b \rightarrow \bar{a} \, , \quad
   \bar{a} \rightarrow \bar{a}\bar{b}a \, , \quad 
   \bar{b} \rightarrow a \ts .
\]
This inflation is primitive, commutes with $P$, and it is easy to see 
that $\mathcal{R}_\Omega=\mathcal{M}_\Omega=\mathcal{T}_\Omega$, so that
$\bar\sigma$ must have mixed spectrum.  We call it the \emph{twisted
silver mean} (TSM) inflation. It is instructive to have a
closer look at $\Omega'$. For that purpose, we rewrite the
inflation in terms of collared tiles:
\[
   \sigma^{}_{1} : \quad
   A\mapsto CD\bar{B} \, , \quad 
   B\mapsto CD\bar{A} \, , \quad 
   C\mapsto CD\bar{A} \, , \quad
   D \mapsto \bar{A}  \, ,
\] 
together with $\bar{\alpha} \mapsto P (\sigma^{}_{1} (\alpha))$ for
all $\alpha \in \{ A, B, C, D \}$, where $A=aa$, $B=a\bar{a}$,
$C=ab$, and $D=b\bar{a}$. There are three variants of the
long tile, and one short tile. However, since $\sigma^{}_{1} (B) =
\sigma^{}_{1} (C)$, we can actually merge the two prototiles $B$ and
$C$, and then rename the old $D$ as the new $C$, so that we arrive at
\begin{equation}\label{eq:barsubst}
    \sigma^{}_{2} : \quad
    A \mapsto BC\bar{B} \, , \quad 
    B \mapsto BC\bar{A} \, , \quad 
    C \mapsto \bar{A} \ts , \quad 
\end{equation}
once again with $\bar{\alpha} \mapsto P (\sigma^{}_{2} (\alpha))$.
The hulls of $\sigma_1$ and $\sigma_2$ are MLD, so that we can stick
to the latter. If we wipe out the bars in $\sigma_2$, we obtain the
inflation for $\Omega'$, which has pure point spectrum in this case,
and actually is a model set by standard arguments \cite{BLM}, because
$\Omega'$ is an a.e.\ 1-1 extension of the original silver mean hull.

For each prototile type, when combining a tile with its barred
version, there is an associated \emph{window} in the CPS
\begin{equation}\label{eq:candptm}
\renewcommand{\arraystretch}{1.2}\begin{array}{r@{}ccccc@{}l}
   & \RR & \xleftarrow{\,\;\;\pi\;\;\,} & \RR \times \RR & 
        \xrightarrow{\;\pi^{}_{\mathrm{int}\;}} & \RR & \\
  \raisebox{1pt}{\text{\footnotesize dense}\,} \hspace*{-1ex}
   & \cup & & \cup & & \cup & \hspace*{-1ex} 
   \raisebox{1pt}{\,\text{\footnotesize dense}} \\
   & \ZZ[\sqrt{2}\,] & \xleftarrow{\; 1-1 \;} & \cL & 
        \xrightarrow{\; 1-1 \;} &\ZZ[\sqrt{2}\,] & \\
   & \| & & & & \| & \\
   & L & \multicolumn{3}{c}{\xrightarrow{\qquad\qquad\;\,\star
       \,\;\qquad\qquad}} 
       &  {L_{}}^{\star\nts}  & \\
\end{array}\renewcommand{\arraystretch}{1}
\end{equation}
When we distinguish according to the bars, however, we do not have a
model set. Nevertheless, for any fixed tiling $\omega$, we can still
take the set of left endpoints of all tiles of a given type (which is
a subset of $\ZZ[\lambda] = \ZZ[\sqrt{2}\,]$ in our setting), and see
what the closure of the image under the $\star$-map of this set is. In
this way, we can determine a \emph{covering window} for each tile
type. Of course, we cannot expect these covering windows for the
different tile types to be disjoint. The result is shown in the
Fig.~\ref{fig:win}.

\begin{figure}[ht]
\begin{center}
 \includegraphics[width=0.7\textwidth]{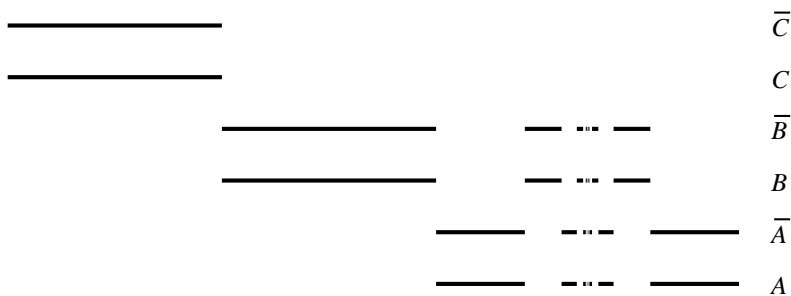}
 \end{center}
 \caption{\label{fig:win} Covering windows of the inflation 
  $\sigma_2$.}
\end{figure}

We see that a barred and an unbarred tile always share the same
covering window. Of course, every left endpoint of a tile is either
the endpoint of a barred or an unbarred tile, but not both. So, the
identical covering windows emerge as the closure of disjoint point
sets. This means that one cannot determine the bar status of a tile by
looking at its internal space coordinate. The bars represent a sort of
\emph{chemical modulation} of the original silver mean tiling, where
the modulation is completely independent of internal space
coordinates.

Another interesting point is that the hull $\Omega''$ of the original
silver mean tiling is indeed different from $\Omega'$ as generated by
the inflation \eqref{eq:barsubst}. The latter has two long tiles, not
one, and is \emph{not} MLD to $\Omega''$.  To see this, let us look at
the structure of the windows. In suitable units, the total window is
an interval of length $1+\lambda=2+\sqrt{2}$, which is split into
three pieces of length $1$, $1$, and $\sqrt{2}$. The first bit belongs
to the short tile, the second to one of the long tiles, and the third
part is fractally split between the two long tiles.  It is first
assigned to one of them, but is then split in the proportion
$1:\sqrt{2}:1$, and the middle part is assigned to the other long
tile, where it is again split into three pieces, the middle one of
which is assigned to the first long tile, and so on. Although all
interval boundaries showing up in this process are contained in
$\ZZ[\lambda^{\star}] = \ZZ[\sqrt{2}\,]$, there is an accumulation
point of interval boundaries in the middle of the subwindow of length
$\sqrt{2}$, which is \emph{not} contained in $\ZZ[\sqrt{2}\,]$. This
point represents an extra singular cut in the CPS. Two tilings
correspond to this point, both of which project to the same silver
mean tiling.

This degeneracy also shows up in the cohomology.  Using the
Anderson--Putnam method \cite{AP}, it is routine to compute
$H^1(\Omega)=\ZZ^3$ and $H^1(\Omega')=\ZZ^2$, which is in line with
\cite[Thm.~5.1]{GHK}: The extra dimension comes from the extra
$\ZZ[\sqrt{2}\,]$-orbit of singular points.

\subsection{Spectral Structure}

From the last section, we know that the TSM tiling dynamical system
has a mixed spectrum. So, in addition to the well-known pure point
part, there must be a continuous part in the spectrum. Here, we want
to investigate what the nature of this continuous part is.

We have seen that the dynamical system of the twisted silver mean
tiling is an almost 2-1 extension of the dynamical system of the
original silver mean tiling. Due to the bar swap symmetry, the Hilbert
space of square-integrable functions on the hull,
$L^2(\Omega_\mathsf{tsm})$, is a tensor product, and can be split into
an even and an odd sector under the bar swap symmetry:
\begin{equation}
  L^2(\Omega_\mathsf{tsm}) \, = \, 
  L^2(\Omega_\mathsf{sm}) \otimes \CC^2 \, = \,
  \bigl(L^2(\Omega_\mathsf{sm}) \otimes \chi^{}_{+} \bigr) \oplus  
  \bigl(L^2(\Omega_\mathsf{sm}) \otimes \chi^{}_{-}\bigr)
  =: \, \mathcal{H}_{+} \oplus \mathcal{H}_{-} \ts , 
\label{hilbertspace}
\end{equation}
where $\chi^{}_{\pm}$ is the even/odd character of the bar swap $P$.
The factor map induces a corresponding map on the Hilbert spaces.  It
sends the first summand isomorphically to $L^2(\Omega_\mathsf{sm})$,
and has the second summand as its kernel. The translation group acts
on this Hilbert space via a unitary representation, and leaves both
sectors separately invariant.

\begin{lemma}\label{lem:specpure}
  The spectral measure of the translation action on\/
  $\Omega^{}_{\mathsf{tsm}}$, confined to either of the two sectors\/
  $\mathcal{H}_{+}$ and\/ $\mathcal{H}_{-}$, is spectrally pure, so it
  has only one non-vanishing component in its Lebesgue
  decomposition. In particular, it is a pure point measure on\/
  $\mathcal{H}_{+}$, and a continuous one on\/ $\mathcal{H}_{-}$,
  where the latter is either purely singular continuous or purely
  absolutely continuous.
\end{lemma}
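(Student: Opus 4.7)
The plan is to treat the two sectors $\mathcal{H}_{\pm}$ separately. For $\mathcal{H}_{+}$, the factor map $\varphi$ induces a translation-equivariant isometry from $\mathcal{H}_{+}$ onto $L^2(\Omega')$. As shown in the previous subsection, $\Omega'$ is an a.e.\ 1-1 extension of the silver mean hull $\Omega_{\mathsf{sm}}$ and is itself a regular model set, hence has pure point dynamical spectrum. Consequently, the spectral measure on $\mathcal{H}_{+}$ is pure point, which yields the first assertion of the lemma.

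For $\mathcal{H}_{-}$, I first rule out a point component. Every eigenfunction of the translation action on $L^2(\Omega_{\mathsf{tsm}})$ factors through the maximal equicontinuous factor $\Omega^{}_{\text{MEF}}$ by a standard Halmos--von Neumann type result. The identities $\mathcal{M}^{}_{\Omega_{\mathsf{tsm}}} = \mathcal{M}^{}_{\Omega'} = \mathcal{T}^{}_{\Omega}$ derived in the previous subsection make the map $\psi$ in diagram~\eqref{eq:facmaps} one-to-one, so $\Omega^{}_{\text{MEF}}$ and $\Omega'_{\text{MEF}}$ agree as compact abelian groups and $\xi$ factors through $\varphi$. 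Every eigenfunction is therefore $P$-invariant and lies in $\mathcal{H}_{+}$, leaving $\mathcal{H}_{-}$ with purely continuous spectrum.

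To exclude a non-trivial mixture of singular continuous and absolutely continuous components on $\mathcal{H}_{-}$, I would derive the exact renormalisation relations for the pair correlation coefficients $\nu^{}_{\!\alpha\beta}$ of the TSM inflation, in direct analogy with Eqs.~\eqref{eq:Fibo-rec} and~\eqref{eq:Fibo-meas}, and decouple them according to the two bar-swap characters. The odd-sector combinations then satisfy a closed system whose Fourier transform takes the form
\[
  \widehat{\bs{\nu}^-} \, = \, \tfrac{1}{\lambda^2}\, \bs{A}^-(.) \bigl( f^{-1}\!\!.\, \widehat{\bs{\nu}^-}\bigr),
\]
with $f(x) = \lambda x$ and a continuous matrix function $\bs{A}^-$. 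Since the Lebesgue decomposition is preserved by pointwise multiplication with a continuous function and by rescaling of the independent variable, the singular continuous and the absolutely continuous parts of $\widehat{\bs{\nu}^-}$ separately satisfy this identity; Strungaru's theorem places the corresponding inverse Fourier transforms in $\oplam\bigl([-\lambda,\lambda]\bigr)$. By an analogue of Proposition~\ref{prop:Fibo-rec}, adapted to the odd-sector constraints, the solution space is one-dimensional once a single initial condition is fixed. Because the two parts are mutually singular, they cannot be non-trivial scalar multiples of a common solution, so one of them must vanish, yielding spectral purity on $\mathcal{H}_{-}$.

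The main obstacle will be this last uniqueness step. Writing out the odd-sector system for the TSM with its collared-tile structure, identifying the self-consistency versus recursive portions in the regime $|z| \le \lambda^2$, and confirming that pointwise multiplication by $\bs{A}^-(k)$ cannot convert singular into absolutely continuous components (or vice versa), and likewise for the inverse rescaling, are the non-trivial pieces. The separate identification of the continuous component on $\mathcal{H}_{-}$ as singular rather than absolutely continuous is then a different matter, to be handled afterwards via the subtle application of the Riemann--Lebesgue lemma anticipated in the introduction.
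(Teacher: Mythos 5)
Your treatment of $\mathcal{H}_{+}$ and your exclusion of point spectrum from $\mathcal{H}_{-}$ are both sound. The genuine gap lies in the step you yourself flag as the main obstacle: ruling out a \emph{mixture} of singular continuous and absolutely continuous components on $\mathcal{H}_{-}$. Your plan is to split $\widehat{\bs{\nu}^{-}}$ into its sc and ac parts, note that both satisfy the odd-sector renormalisation system, and then invoke Strungaru's theorem to place the inverse Fourier transforms of these parts back in $\oplam\bigl([-\lambda,\lambda]\bigr)$ so that a one-dimensionality argument can finish the job. But Strungaru's theorem, as used in the Fibonacci section, provides only the Eberlein decomposition $\gamma = \gamma^{}_{\mathsf{s}} + \gamma^{}_{0}$ into a strongly almost periodic part (pure point transform) and a null-weakly almost periodic part (continuous transform), each supported in a model set. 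It says nothing about the finer split of the \emph{continuous} part into sc and ac pieces: it is not guaranteed that $(\widehat{\bs{\nu}^{-}})^{}_{\mathsf{sc}}$ and $(\widehat{\bs{\nu}^{-}})^{}_{\mathsf{ac}}$ are separately Fourier transformable, let alone that their inverse transforms are pure point measures supported in a Meyer set. Without that input, the uniqueness argument in the spirit of Proposition~\ref{prop:Fibo-rec} has nothing to act on, and purity on $\mathcal{H}_{-}$ does not follow. (You would also still have to verify the one-dimensionality of the odd-sector solution space for the TSM; given the reducible IDA forced by the bar swap, the full solution space is higher-dimensional, as in the Thue--Morse and Rudin--Shapiro cases, so this restriction is not automatic.)

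The paper avoids all of this by invoking a classical purity law instead: on the symbolic side the TSM system is a two-point, i.e.\ $\{\pm 1\}$-cocycle, extension of the irrational rotation defined by the silver number, and for such extensions the spectral measure in each character sector is spectrally pure by \cite[Cor.~3.6]{Q} together with Helson's cocycle theory \cite{Hel}. Since the inflation multiplier is a PV unit, the symbolic and geometric systems are topologically conjugate by \cite[Thm.~3.1]{CS}, so this Hilbert-space purity statement carries over to $\Omega^{}_{\mathsf{tsm}}$ with its $\RR$-action. If you want to salvage your renormalisation route, you need an independent argument that the sc and ac parts of the odd-sector correlation measures separately satisfy the renormalisation identities \emph{and} retain Meyer-set support after inverse transform --- which is precisely the part your proposal leaves open.
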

\begin{proof}
  This is a consequence of known results on index-$2$ extensions of
  irrational rotations; compare \cite{Q,Hel}. On the symbolic side,
  our system is almost everywhere $1:1$ over the irrational rotation
  defined by the silver number. There, the claim follows from
  \cite[Cor.~3.6]{Q}; see also \cite{Hel}. Since this purity law is a
  Hilbert space result, it carries over to the shift space.  Moreover,
  the same result remains true if one goes to the continuous
  translation action of $\RR$ as obtained by a standard suspension;
  see \cite{CFS,EW} for background.

  Our geometric setting can be viewed as a suspension with two
  heights. Since the inflation multiplier is a PV unit, the
  symbolic and the geometric dynamical systems are conjugate
  by \cite[Thm.~3.1]{CS}. The two spectral types are thus the same,
  and our claim follows.
\end{proof}

Lemma~\ref{lem:specpure} still leaves two possibilities for the
spectral type in the odd sector. In order to discriminate between the
two, we look at the asymptotic behaviour of the correlation functions
$\nu_{\alpha\beta}$, in the same way as has been done for the
Thue--Morse substitution. Because of the purity result of
Lemma~\ref{lem:specpure}, we can in fact take any combination of
correlation functions which is odd under the bar swap. A simple such
combination is the auto-correlation of a structure where the left end
points of the unbarred tiles are decorated with a weight $+1$, and
those of the barred tiles with a weight $-1$. So, let $\varLambda$ be
the set of all left endpoints of tiles of a tiling $\omega$, and set
$w(x)=\pm1$, if $x\in\varLambda$ and $x$ is the left endpoint of an
unbarred (barred) tile. With $\varLambda_R=\varLambda\cap(-R,R)$, the
relevant autocorrelation coefficients then are
\begin{equation}
   \nu^{}_\mathsf{tsm}(z)\, = \lim_{R\to\infty}\frac1{|\varLambda_R|}
   \sum_{\substack{x,y\in\varLambda_R \\ x-y=z}}w(x) \ts w(y).
\label{eta-tsm}
\end{equation}
Note that we have used here the same relative normalisation as for the
correlation functions of Eq.~\eqref{eq:Fibo-rec}. In practice, instead
of computing the limit \eqref{eta-tsm}, it is more convenient to
partition $\varLambda$ into certain patches, which all have
well-defined relative frequencies, and add up the contributions of
these patches, weighted by their frequencies.

We now have to determine the decay or non-decay of
$\nu_\mathsf{tsm}(z)$ as $z\to\infty$. 

\begin{lemma} 
  One has $\,\lim_{n\to\infty}\nu^{}_\mathsf{tsm}(z_n) = 1-\sqrt{2}$,
  where $z_n = (1+\lambda)\ts \lambda^n$.
\label{nu-non-decay}
\end{lemma}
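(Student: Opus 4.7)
The plan is to compute $\nu^{}_\mathsf{tsm}(z_n)$ by partitioning $\vL$ into level-$(n+1)$ supertile patches and summing contributions from each. The choice $z_n=(1+\lambda)\lambda^n$ is tailored to the inflation structure: in $\bar\sigma^{n+1}(a) = \bar\sigma^n(a)\bar\sigma^n(b)\bar\sigma^n(\bar a)$ of total length $\lambda^{n+2}$, the three sub-level-$n$ supertiles sit at offsets $0$, $\lambda^{n+1}$ and $z_n$. Since $\bar\sigma^n(\bar a) = P\bigl(\bar\sigma^n(a)\bigr)$, every tile at relative position $x$ in the leftmost sub-supertile has its bar-swap partner at relative position $x+z_n$ in the rightmost one, thus contributing $-1$ to $\nu^{}_\mathsf{tsm}(z_n)$. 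A straightforward length check shows that \emph{all} intra-supertile pairs at distance $z_n$ inside a level-$(n+1)$ $a$- or $\bar a$-supertile are of this form, while $b$- and $\bar b$-level-$(n+1)$ supertiles are too short to host any pair at distance $z_n$.

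The intra-supertile contribution is then evaluated with the help of the Perron--Frobenius data. Each contributing supertile gives $|\bar\sigma^n(a)|$ pairs, each with weight $-1$, and the per-tile frequency of level-$(n+1)$ $a$- or $\bar a$-supertiles is $2 f^{}_a$ divided by the mean patch length $\langle |\bar\sigma^{n+1}(\cdot)| \rangle$. Using the asymptotics $|\bar\sigma^n(a)|/\langle |\bar\sigma^{n+1}(\cdot)| \rangle \to l^{}_a/\lambda$ with $l^{}_a = \lambda/2$ and $f^{}_a = \sqrt{2}/4$, the intra-supertile limit works out to $-2 f^{}_a l^{}_a/\lambda = -\sqrt{2}/4$. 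The remaining task is to handle the boundary pairs whose two tiles lie in distinct, adjacent level-$(n+1)$ supertiles. For this, I would derive the exact renormalisation relations for $\nu^{}_{\alpha\beta}(z)$ from local recognisability of $\bar\sigma$ in full analogy with Proposition~\ref{prop:Fibo-relations} (digit offsets $\{0,\lambda,\lambda+1\}$ for the $a/\bar a$-supertiles and $\{0\}$ for $b/\bar b$-supertiles) and then apply them to the boundary pairs.

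Crucially, since $\bar\sigma \circ P = P \circ \bar\sigma$, the renormalisation decouples in the $P$-eigensectors; in the odd sector the reduced $2\times 2$ digit matrices are
\[
D_0^- = \Bigl(\begin{smallmatrix}1 & -1\\ 0 & 0\end{smallmatrix}\Bigr), \qquad
D_\lambda^- = \Bigl(\begin{smallmatrix}0 & 0\\ 1 & 0\end{smallmatrix}\Bigr), \qquad
D_{\lambda+1}^- = \Bigl(\begin{smallmatrix}-1 & 0\\ 0 & 0\end{smallmatrix}\Bigr),
\]
summing to $\bigl(\begin{smallmatrix}0 & -1\\ 1 & 0\end{smallmatrix}\bigr)$ with eigenvalues $\pm\ii$ of unit modulus. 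This is the structural reason that the $1/\lambda$ factor in each renormalisation step is \emph{not} compensated by matrix growth, so that the iterated boundary corrections form a convergent geometric series whose sum is accessible in closed form. The main obstacle will be to set up this summation as a finite-dimensional self-consistent recursion on a small number of correlation values at the related arguments $z_n + O(1)$, together with the initial datum at $n=0$ that is read off by direct inspection of $\bar\sigma(a)$ and $\bar\sigma(\bar a)$. Assuming this machinery is in place, the intra- and boundary contributions combine to give the claimed limit $1-\sqrt{2} = -1/\lambda$.
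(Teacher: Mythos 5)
Your intra-supertile analysis is correct and nicely exploits the same structural fact the paper uses: in $\bar\sigma^{n+1}(a)$ the block $\bar\sigma^n(\bar a)=P\bigl(\bar\sigma^n(a)\bigr)$ sits at offset exactly $z_n=(1+\lambda)\lambda^n$ from $\bar\sigma^n(a)$, so every such pair is an anti-match, and your frequency bookkeeping giving $-2f_a l_a/\lambda=-\sqrt{2}/4$ for that part is consistent with the Perron--Frobenius data. The problem is that this accounts for only part of the limit: the remaining boundary contribution must supply $1-\sqrt{2}+\sqrt{2}/4=1-\tfrac{3\sqrt{2}}{4}\approx-0.061$, and this is precisely the piece you never compute. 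You state that you ``would derive'' the renormalisation relations and that, ``assuming this machinery is in place,'' the contributions ``combine to give the claimed limit'' --- but the boundary pairs are not a negligible correction (every tile of a short level-$(n+1)$ supertile, and the trailing $\bar\sigma^n(b)\bar\sigma^n(\bar a)$ portion of every long one, launches such a pair), and evaluating them requires solving an actual self-consistent system for cross-supertile overlaps. That is where the paper's proof invests all of its effort: it enumerates the six possible triples of consecutive level-$n$ supertiles with their frequencies, introduces the asymptotic overlap coefficients $c(p_1,p_2)$, derives a closed linear system whose solution is $c(ab,b\bar a)=1-\sqrt 2$, and only then assembles the weighted sum to get $1-\sqrt 2$. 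Without carrying out the analogue of that computation, your argument establishes at most that the limit, if it exists, differs from $-\sqrt 2/4$ by some boundary term; the claimed value is asserted, not derived.

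Your observation that the odd-sector digit matrices sum to $\bigl(\begin{smallmatrix}0&-1\\1&0\end{smallmatrix}\bigr)$ with unimodular eigenvalues is correct and is a good heuristic for why the correlations do not decay, but it does not substitute for the quantitative step: non-decay of modulus is compatible with many limit values (or with no limit at all along the subsequence $z_n$), so the specific value $1-\sqrt 2$ still has to come out of an explicit, finite, self-consistent calculation such as the one in the paper.
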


\begin{proof}
  Before we do any concrete computations, let us sketch the strategy
  of the proof.  The contributions to $\nu^{}_\mathsf{tsm}(1+\lambda)$
  come from pairs of tiles where the left endpoint of the second tile
  $t_2$ is located, by a shift of $1+\lambda$, to the right of that of
  tile $t_1$. For any such pair, $t_2$ is the second neighbour to the
  right of $t_1$. We can therefore determine all possible triples of
  three consecutive tiles in the tiling, and add up their
  contributions to the correlation at distance $1+\lambda$, weighted
  with the relative frequency of each triple. The relative frequencies
  can be determined by Perron--Frobenius theory \cite{Q,TAO}. We
  regard the triples as doubly right-collared tiles, and determine the
  inflation matrix on the induced inflation on these collared
  tiles. The relative frequencies are then given by the components of
  the right Perron--Frobenius eigenvector of that matrix.

  For the correlation at distance $(1+\lambda)\ts \lambda^n$, we do
  the same with triples of supertiles of order $n$. These have the
  same relative frequencies as the triples of tiles. The pairs of
  tiles contributing to the correlation then consist of a left tile in
  the left supertile of the triple, and a corresponding right tile at
  distance $(1+\lambda)\ts \lambda^n$, which is contained in one of
  the other two (collar) supertiles. Since the underlying silver mean
  tiling has pure point spectrum, the density of tiles $t_1$
  \emph{not} having a corresponding tile $t_2$ with the same geometry
  at distance $(1+\lambda)\ts \lambda^n$ asymptotically vanishes as
  $n\to\infty$.  This will become evident from the explicit
  computations below.  

  To this end, we have to look for pairs of tiles which either
  \emph{match} (have the same geometry and bar status) or
  \emph{anti-match} (have the same geometry, but opposite bar
  status). Specifically, if $p_1$ and $p_2$ are two patches of tiles
  with the same support, we denote by $n_\pm(p_1,p_2)$ the number of
  tiles in $p_1$ that have a (anti-)matching partner in $p_2$ at the
  same position. We are then interested in the \emph{asymptotic
    overlap}
\begin{equation}\label{eq:asympolap}
     c(p_1,p_2) \, = \lim_{n\to\infty} \tilde{c}
     \bigl(\bar\sigma^n(p_1),\bar\sigma^n(p_2)\bigr),
\end{equation}
where
\[
    \tilde{c}(p_1,p_2)\, = \,
     \frac{n_+(p_1,p_2) - n_-(p_1,p_2)}
            {n_+(p_1,p_2) + n_-(p_1,p_2)} \ts .
\]
The possible triples of tiles are given in Table~\ref{tab:ttrip},
along with their first inflations and relative frequencies. The
subpatches that need to be compared are underlined. 

\begin{table}[t]
{\renewcommand{\arraystretch}{1.3}
\begin{tabular}{|c|l|l|l|}
\hline
no. & triple & inflated triple & rel. frequency \\
\hline
1 & $aaa$ & $\underline{ab\bar{a}}\ 
   a\underline{b\bar{a}\ a}b\bar{a}$ & $x=-1+\frac{3\sqrt{2}}{4}$  \\
2 & $a\bar{a}\bar{b}$ & $\underline{ab\bar{a}}\ 
   \bar{a}\underline{\bar{b}a\ a}$ & $x=-1+\frac{3\sqrt{2}}{4}$ \\
3 & $ab\bar{a}$ & $\underline{ab\bar{a}}\ \bar{a}\ 
   \underline{\bar{a}\bar{b}a}$ & $y=\frac{1}{2}-\frac{\sqrt{2}}{4}$ \\
4 & $aab$ & $\underline{ab\bar{a}}\ a\underline{b\bar{a}\ 
   \bar{a}}$ & $z=\frac{3}{2}-\sqrt{2}$ \\ 
5 & $b\bar{a}a$ & $\underline{\bar{a}}\ \bar{a}\bar{b}a\ 
   \underline{a}b\bar{a}$ & $x=-1+\frac{3\sqrt{2}}{4}$ \\
6 & $b\bar{a}\bar{a}$ & $\underline{\bar{a}}\ \bar{a}\bar{b}a\ 
   \underline{\bar{a}}\bar{b}a$ & $z=\frac{3}{2}-\sqrt{2}$ \\
\hline
\end{tabular}
\bigskip
\caption{Possible triples of consecutive (super-)tiles in the 
  TSM tiling, along with their inflations and 
  relative frequencies. For each triple given, there is also a 
  bar-swapped version, which contributes the same to the correlation.
  As given, the relative frequencies add up to $\frac{1}{2}$.
%Tiles in the first supertile and the corresponding patch at
%distance $(1+\lambda)\lambda^n$ are underlined.
\label{tab:ttrip}}}
\end{table}

Let us now discuss how the different triples contribute. The first two
triples together contribute
$c(ab\bar{a},b\bar{a}a)+c(ab\bar{a},\bar{b}aa)$.  We see that the
contributions of the first two tiles of each triple cancel. What
remains is the anti-match of the third tiles.  Asymptotically, we get
an anti-match on a fraction $\lambda^{-1}$ of the total patch length,
which has to be weighted with the relative frequency of the
triples. The third and the fifth triple each contribute an anti-match
on the whole length of the patch, whereas the sixth triple contributes
a match. Somewhat more complicated is the contribution
$c(ab\bar{a},b\bar{a}a)$ of the fourth triple.  We get an anti-match
due to the third tile on a fraction $\lambda^{-1}$ of the patch, plus
$c(ab,b\bar{a})$. Inflating the latter once, we obtain
\[
  c(ab,b\bar{a}) \, = \,
   c(ab\bar{a}\bar{a},\bar{a}\bar{a}\bar{b}a) \, = \,
  \frac{(1+\lambda)\, c(b\bar{a},\bar{a} b)-2\lambda}{3\lambda+1}
  \, = \, \lambda^{-1}\bigl(c(b\bar{a},\bar{a}b)-\sqrt{2}\,\bigr),
\]
and, in a similar way,
\begin{align*}
   c(     b \bar{a}, \bar{a}b) &= 
       \lambda^{-1}\bigl(c(\bar{a}\bar{b}, 
       \bar{b}a)  + \sqrt{2}\,\bigr), \\
   c(\bar{a}\bar{b}, \bar{b}a) &= 
       \lambda^{-1}\bigl(c(\bar{b} a , a b)
      - \sqrt{2}\,\bigr), \\
   c(\bar{b} a , a b) &= 
   \lambda^{-1}\bigl(c( a b , b\bar{a}) + \sqrt{2}\,\bigr).
\end{align*}
These equations can be solved for $c(ab,b\bar{a})$, which gives
$c(ab,b\bar{a})=1-\sqrt{2}$. Putting everything together,
we get
\begin{equation}
    \lim_{n\to\infty}\nu^{}_\mathsf{tsm}(z_n) \, = \,
   \frac{\lambda\left(-2x\lambda^{-1}-y-z
   \frac{\lambda+(1+\lambda)(1-\sqrt{2})}
    {2\lambda+1}\right)+z-x}{\lambda(2x+y+z)+x+y}
    \, = \, 1-\sqrt{2} \ts ,
\end{equation}
where the relative frequencies $x$, $y$ and $z$ have been taken from 
Table~\ref{tab:ttrip}.
\end{proof}

\begin{theorem}
  The twisted silver mean dynamical system has two spectral
  components, both of which are singular. The dynamical spectrum from
  the even sector under the bar swap is pure point, whereas that
  from the odd sector is purely singular continuous.
\end{theorem}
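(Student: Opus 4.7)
The plan is to analyse the two sectors $\mathcal{H}_{+}$ and $\mathcal{H}_{-}$ separately, exploiting Lemma~\ref{lem:specpure} to reduce each to a single spectral type, and then using Lemma~\ref{nu-non-decay} together with Riemann--Lebesgue to distinguish singular continuous from absolutely continuous in the odd sector.

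For the even sector $\mathcal{H}_{+}$, the factor map induces a unitary isomorphism to $L^{2}(\Omega_{\mathsf{sm}})$ intertwining the translation action. Since $\sigma^{}_{\nts\mathsf{sm}}$ is a primitive Pisot inflation whose hull is a regular model set, the silver mean dynamical system has pure point dynamical spectrum; see \cite[Chs.~7 and~9]{TAO}. Hence the contribution from $\mathcal{H}_{+}$ is pure point, which handles the first half of the claim.

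For the odd sector $\mathcal{H}_{-}$, Lemma~\ref{lem:specpure} leaves only the dichotomy between purely singular continuous and purely absolutely continuous. To rule out the latter, I would interpret the weighted autocorrelation $\nu^{}_{\mathsf{tsm}}$ from Eq.~\eqref{eta-tsm} as a spectral correlation in $\mathcal{H}_{-}$. The weight function $w$ changes sign under the bar swap $P$, so the associated Dirac comb $\omega_{w}$ defines a non-zero vector in $\mathcal{H}_{-}$, and the spectral theorem for the unitary translation group yields a finite positive spectral measure $\sigma_{w}$ on $\RR$ with
\[
   \nu^{}_{\mathsf{tsm}} (z) \, = \int_{\RR}
   \ee^{2\pi\ii k z} \dd \sigma_{w} (k) \ts ,
\]
which, by the equivalence between diffraction and dynamical spectrum for Meyer-type data (compare Section~\ref{sec:prelim} and \cite{BLvE,LS}), inherits the pure spectral type of $\mathcal{H}_{-}$.

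Now I would argue by contradiction. If $\sigma_{w}$ were absolutely continuous, say $\dd \sigma_{w} = h \dd k$ with $h \in L^{1}(\RR)$, the Riemann--Lebesgue lemma would force $\nu^{}_{\mathsf{tsm}} (z) \to 0$ as $\lvert z \rvert \to \infty$ along \emph{every} sequence. But Lemma~\ref{nu-non-decay} exhibits the explicit sequence $z_{n} = (1+\lambda)\ts \lambda^{n}$ along which $\nu^{}_{\mathsf{tsm}} (z_{n}) \to 1 - \sqrt{2} \ne 0$, which is the desired contradiction. Therefore $\sigma_{w}$, and by the purity statement of Lemma~\ref{lem:specpure} then the entire spectral measure on $\mathcal{H}_{-}$, is purely singular continuous. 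The main technical obstacle in this plan is the clean identification of $\sigma_{w}$ with the sector-pure spectral measure on $\mathcal{H}_{-}$ (one must ensure that $\omega_{w}$ lies genuinely in $\mathcal{H}_{-}$ and that its cyclic subspace inherits the purity); once that is in place, the non-decay provided by Lemma~\ref{nu-non-decay} together with Riemann--Lebesgue completes the argument immediately.
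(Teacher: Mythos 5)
Your proposal is correct and follows essentially the same route as the paper: purity of each sector via Lemma~\ref{lem:specpure}, pure point spectrum in $\mathcal{H}_{+}$ from the a.e.\ identification with the silver mean system, and exclusion of absolute continuity in $\mathcal{H}_{-}$ by combining the non-decay of $\nu^{}_{\mathsf{tsm}}$ along $z_n=(1+\lambda)\lambda^n$ from Lemma~\ref{nu-non-decay} with the Riemann--Lebesgue lemma. The only point to tidy is the one you flag yourself: the Dirac comb $\omega_w$ is not an $L^2$ vector, so the correlation measure $\sum_z \nu^{}_{\mathsf{tsm}}(z)\ts\delta_z$ is translation bounded rather than the Fourier transform of a \emph{finite} spectral measure, and the Riemann--Lebesgue step should be run (as in the paper) on the autocorrelation with uniformly discrete support, or after smoothing with a test function.
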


\begin{proof}
  By Lemma~\ref{lem:specpure}, the dynamical spectrum in the even
  sector is pure point, whereas in the odd sector it is continuous and
  of pure spectral type. By Lemma~\ref{nu-non-decay}, the correlation
  measure $\sum_{z\in\varLambda-\varLambda} \nu^{}_\mathsf{tsm}(z)
  \delta_z$ does not decay to zero towards infinity, so that, by the
  Riemann--Lebesgue lemma and the fact that $\varLambda - \varLambda$
  is uniformly discrete for a Pisot inflation, its Fourier transform
  must be singular. This implies that there must be a singular
  continuous component in the diffraction spectrum, and hence in the
  dynamical spectrum of the odd sector. As that spectrum is of pure
  type, it must be purely singular continuous.
\end{proof}

\begin{remark}
  Starting from a tiling with pure point spectrum, we constructed
  another tiling with a mixed spectrum by splitting each tile into a
  barred and an unbarred variant, and modified the inflation such that
  a bar swap symmetry results. Tilings with mixed spectrum can also be
  obtained if each tile of a pure point tiling is split into $k$
  copies, and the inflation is modified such that a permutation
  symmetry acting on these $k$ copies results.  However, a spectral
  purity result for the continuous spectrum sector, as in
  Lemma~\ref{lem:specpure}, can only be expected if $k=2$.
\end{remark}

\begin{remark}
  The set $\varLambda_+$ of left endpoints of all un-barred tiles of a
  twisted silver mean tiling is a Meyer set (it is a relatively dense
  subset of a model set), which is linearly repetitive (it is a
  component of a primitive inflation tiling), and which has mixed
  spectrum of singular type. This shows that there are highly ordered
  Meyer sets with mixed spectrum.  In line with a result of Sinai,
  they have entropy $0$, in contrast to Meyer sets that can be thought
  of as model sets with an (a posteriori) thinning disorder of
  positive entropy. Such cases would have a mixed spectrum with a
  non-trivial pure point part and some absolutely continuous
  component. One example is provided by sets of the form $2 \ZZ \cup
  \vL$ where $\vL$ is a Bernoulli subset of $2 \ZZ + 1$ obtained by
  coin flipping, compare \cite[Sec.~11.2]{TAO}, and similar
  constructions on the basis of arbitrary model sets. Another example
  is provided by the random noble means inflation rules discussed in
  \cite{GL,Moll}.
\end{remark}

\section*{Acknowledgements}

We are grateful to Nicolae Strungaru and Mariusz Lemanzcyk for
discussions. This work was supported by the German Research Foundation
(DFG), within the CRC 701. We thank an anonymous reviewer for some
careful comments that helped to improve the presentation.

\bigskip

\end{document}